%
%
%
\documentclass[11pt]{amsart}
\usepackage{amsmath,amsfonts,amssymb,mathrsfs}
\usepackage{amssymb,mathrsfs,graphicx}
\setlength{\unitlength}{1cm} \setlength{\topmargin}{0.1in}
\setlength{\textheight}{8.4in} \setlength{\textwidth}{6in}
\setlength{\oddsidemargin}{0.2in}
\setlength{\evensidemargin}{0.2in}

\title[the Boltzmann-BGK model ]{Cauchy problem for the Boltzmann-BGK model near a global Maxwellian}
\author{Seok-Bae Yun}
\address{Department of Mathematics Sciences, KAIST (Korea Advanced Institute of Science and Technology), 373-1 Guseong-dong, Yuseung-gu, Daejeon, 305-701, Korea}
\email{sbyun@kaist.ac.kr}

\begin{document}
\subjclass[2010]{35Q20, 76P05, 35B35, 35B40, 82C40}
\keywords{Boltzmann equation, BGK model, Non-trivial collision frequency, Nonlinear energy method, Uniform $L^2$-stability}

\newtheorem{theorem}{Theorem}[section]
\newtheorem{lemma}{Lemma}[section]
\newtheorem{corollary}{Corollary}[section]
\newtheorem{proposition}{Proposition}[section]
\newtheorem{remark}{Remark}[section]
\newtheorem{definition}{Definition}[section]

\renewcommand{\theequation}{\thesection.\arabic{equation}}
\renewcommand{\thetheorem}{\thesection.\arabic{theorem}}
\renewcommand{\thelemma}{\thesection.\arabic{lemma}}
\newcommand{\bbr}{\mathbb R}
\newcommand{\bbt}{\mathbb T}
\newcommand{\bbs}{\mathbb S}
\newcommand{\bbz}{\mathbb Z}
\newcommand{\bn}{\bf n}

\def\charf {\mbox{{\text 1}\kern-.24em {\text l}}}
\begin{abstract}
In this paper, we are interested in the Cauchy problem for the Boltzmann-BGK model for a general
class of collision frequencies.
We prove that the Boltzmann-BGK model linearized around a global Maxwellian admits a unique global smooth solution
if the initial perturbation is sufficiently small in a high order energy norm.
We also establish an asymptotic decay estimate and uniform $L^2$-stability for nonlinear perturbations.
\end{abstract}
\maketitle

\tableofcontents
%
%
%
%
\section{Introduction}
The dynamics of a monatomic, non-ionized gaseous system is known to be governed by the celebrated Boltzmann equation.
But the complicated structure of the collision operator has long been a major obstacle in developing
efficient numerical methods \cite{C}.  In an effort to find a simplified model of the Boltzmann equation,
Bhatnagar, Gross and Krook \cite{B-G-K}, and independently Walender \cite{W}, introduced the Boltzman-BGK model:
\begin{align}
\begin{aligned}\label{main.1}
\displaystyle\partial_t F  + v \cdot \nabla_x F&= \nu(\mathcal{M}(F)-F),\cr
\displaystyle F(x,v,0) &= F_0(x,v),
\end{aligned}
\end{align}
where $F(x,v,t)$ for $ (x,v,t) \in \bbt^3 \times \bbr^3 \times \bbr_+$ is the particle distribution function representing the number density of particles
in phase space at position $x$, velocity $v$ and time $t$.
$\bbt^3$ denotes the $3$-dimensional torus $\bbr^3/\bbz^3$.
$\mathcal{M}(F)$ is the local Maxwellian defined as
\[
\displaystyle\mathcal{M}(F)(x,v,t)=\frac{{\rho(x,t)}}{\sqrt{(2\pi T(x,t)})^3}\exp\Big(-\frac{|v- U(x,t)|^2}{2 T(x,t)}\Big),
\]
where $\rho$, $U$ and $T$ denote the macroscopic fields constructed from velocity moments of the distribution function:
\begin{eqnarray*}
\rho(x,t)&=& \int_{\bbr^d} F(x,v,t)dv,\\
\rho(x,t)U(x,t)&=&\int_{\bbr^d}F(x,v,t)v dv,\\
3\rho(x,t)T(x,t)&=& \int_{\bbr^d} F(x,v,t)| v-U(x,t)|^2dv.
\end{eqnarray*}
Throughout this paper, we assume that the collision frequency $\nu$ takes the following form:
\begin{eqnarray*}
\nu=\nu_{\eta,\omega}(\rho,T)\equiv \rho^{\eta}\hspace{0.1cm}T^{\omega},
\end{eqnarray*}
where we have suppressed the constant to be unity for simplicity.
A wide class of non-trivial collision frequencies is encompassed by this model. For example,
Aoki et al. \cite{A-S-Y} studied the collision frequency defined as
\begin{eqnarray*}
\nu_{1,0}=\rho.
\end{eqnarray*}
On the other hand, the following model was considered in \cite{Chap-Cowl, Miu,Y-H}:
\begin{eqnarray*}
\nu_{1,1-\omega}=\rho T^{1-\omega},
\end{eqnarray*}
where $\omega$ was chosen to be the exponent of the viscosity law of the gas.
The constant collision frequency \cite{B-G-K,Chan,F-R,Mischler,P,P-P,R-P,R-P-Y} corresponds to
\begin{eqnarray*}
\nu_{0,0}=1.
\end{eqnarray*}
\indent The relaxation operator is designed to share important features with Boltzmann collision operator. For exmaple,
the relaxation operator satisfies the following cancelation property:
\begin{eqnarray}\label{ConservationLaw}
\int_{\bbr^3} \nu\big(\mathcal{M}(F)-F\big)
\left(\begin{array}{c}
1\\
v\\
|v|^2\\
\end{array}
\right) dv=0,
\end{eqnarray}
which implies the conservation laws of mass, total momentum and total energy:
\begin{align}
\begin{aligned}\label{conservation.law.F}
&\frac{d}{dt}\int_{\bbt^3\times\bbr^3}Fdxdv=0,\cr
&\frac{d}{dt}\int_{\bbt^3\times\bbr^3}Fvdxdv=0,\cr
&\frac{d}{dt}\int_{\bbt^3\times\bbr^3}F|v|^2dxdv=0.
\end{aligned}
\end{align}
We also have the following celebrated H-theorem:
\begin{eqnarray}\label{EntropyDissipation}
\frac{d}{dt}\int_{\bbt^3\times\bbr^3} F\log Fdxdv=\int_{\bbt^3\times\bbr^3} \nu\big({\mathcal M}(F)-F\big)\log F dxdv\leq 0.
\end{eqnarray}
\indent From the numerical point of view, the BGK model considerably simplifies the situation
in that it is sufficient to update the macroscopic fields in each time step.
But mathematical analysis is not necessarily easier, because the relaxation operator
involves more nonlinearity compared to the bilinear collision operator of the Boltzmann equation.
In \cite{P}, Perthame et al. established the global existence of weak solutions for the BGK model with
constant collision frequency. Regularity and uniqueness was then considered in \cite{Iss, P-P} under the local existence
frame work. The result in \cite{Iss} was employed in \cite{R-P-Y} to prove the convergence
in a weight $L^1$ norm of a semi-Lagrangian scheme developed in \cite{F-R,R-P,Pi-Pu,Sant}, which is, as far as the author knows,
the first result on strong convergence of a fully discretized scheme for nonlinear collisional kinetic equations.
In near-a-global-Maxwellian regime,
the global existence in  the whole space $\bbr^3$ was established in \cite{Bel}
employing Ukai's spectral analysis \cite{Ukai}. Chan \cite{Chan} studied the global existence in torus
using the nonlinear energy method developed by Liu, Yang and Yu \cite{L-Y-Y}. In \cite{Chan}, however,
the decay rate is not known, which is expected to be exponentially fast.
\newline
\indent The purpose of the present paper is two-fold: first, we obtain the well-posedness
of the Boltzmann-BGK model near a global Maxwellian for a wide class of non-trivial collision frequencies.
Secondly, we establish the asymptotic decay estimate and uniform $L^2$-stability
\cite{H-Y-Y,H-L-Y-Y}.
The main theoretical tool is the nonlinear energy method developed by Guo \cite{Guo1,Guo2,Guo3} to investigate
the well-posedness of various important collisional kinetic equations such as the Boltzmann equation
or the Vlasov-Maxwell (Poisson)-Boltzmann equation. \newline
\indent Brief comments on possible extensions of our results are in order.
Our assumptions on collision frequency do not cover
the velocity dependent models proposed in \cite{Stru,Z-Stru}, which involves additional technical difficulties.
Cauchy problems for relaxation models describing ionized plasma also can be considered by extending the arguments
of this paper. We leave these topics for future research \cite{Yun}.\newline
Before we proceed further, we set some notational conventions here.
\begin{itemize}
\item $\langle\cdot,\cdot\rangle$
denotes the standard $L^2$ inner product in $\bbt^d\times\bbr^d$.
\[
\langle f,g\rangle=\int_{\bbr^3\times\bbr^3}f(x,v)g(x,v)dxdv.
\]
\item $\|\cdot\|_{L^2_{v}}$ and $\|\cdot\|_{L^2_{x,v}}$ denotes $L^2$ norms in $\bbr^d_v$
and $\bbt^d_x\times\bbr^d_v$ respectively.
\begin{eqnarray*}
&&\|f\|_{L^2_v}\equiv\Big(\int_{\bbr^3}|f(v)|^2dv\Big)^{\frac{1}{2}},\cr
&&\|f\|_{L^2_{x,v}}\equiv\Big(\int_{\bbr^3\times\bbr^3}|f(x,v)|^2dxdv\Big)^{\frac{1}{2}}.\cr
\end{eqnarray*}
\item Multi-indices $\alpha$, $\beta$ are defined by
\[
\alpha=[\alpha_0, \alpha_1, \alpha_2, \alpha_3],\quad\beta=[\beta_1, \beta_2, \beta_3]
\]
and
\[
\partial^{\alpha}_{\beta}=\partial^{\alpha_0}_{t}\partial^{\alpha_1}_{x_1}\partial^{\alpha_2}_{x_2}\partial^{\alpha_3}_{x_3}
\partial^{\beta_1}_{v_1}\partial^{\beta_2}_{v_2}\partial^{\beta_3}_{v_3}.
\]
\item The energy norm $|||\cdot|||$ is defined as follows.
\begin{eqnarray*}
|||f(t)|||\equiv\sum_{|\alpha|+|\beta|\leq N}\|\partial^{\alpha}_{\beta}f(t)\|_{L^2_{x,v}},
\end{eqnarray*}
where $N>4$.
\item We define the high order energy norm for $f$ as
\begin{equation*}
E(t)=\frac{1}{2}|||f(t)|||^2+\nu_c\int^t_0|||f(s)|||^2ds,
\end{equation*}
where the constant $\nu_c$ is defined in Proposition \ref{linearized.collision.frequency}.
\item Throughout this paper, $C_{a,b,\cdots}$ will denote a generic constant depending on $a, b, \cdots$, but
not on $x$, $v,$ and $t$.
\end{itemize}
The paper is organized as follows. In section 2, we investigate the linearization
procedure of the Boltzmann-BGK model. In section 3, the main theorem is stated.
In section 4, we present several important technical lemmas.
Section 5 is devoted to establishing the local in time existence and uniqueness of smooth solutions.
In section 6, we study the coercive property of the linearized relaxation operator.
Finally, in section 7, we combine these results to obtain the global in time existence of the classical solution.
%
%
%
%
\section{Linearized BGK model}
In this section, we consider the linearization of the Boltzmann-BGK model around the normalized global Maxwellian:
\[
m(v)=\frac{1}{\sqrt{(2\pi)^3}}e^{-\frac{|v|^2}{2}}.
\]
We first establish a technical lemma which will be frequently used in the sequel.
%
%
%
\begin{lemma}\label{Jacobian}
Let $G$ be a function given by
\[
G(x,t)=\frac{\rho|U|^2+3\rho T}{\sqrt{6}}-\frac{3\rho}{\sqrt{6}}.
\]
Then the Jacobian matrix of the change of variable $(\rho, U, T)\rightarrow (\rho, \rho U,G)$ is given by
\begin{eqnarray*}
\frac{\partial(\rho,\rho U, G)}{\partial(\rho, U, T)}=
\left(\begin{array}{ccccc}
1&0&0&0&0\cr
u_1&\rho&0&0&0\cr
u_2&0&\rho&0&0\cr
u_3&0&0&\rho&0\cr
\frac{|U|^2+3T-3}{\sqrt{6}}&\frac{2\rho U_1}{\sqrt{6}}&\frac{2\rho U_2}{\sqrt{6}}&\frac{3\rho U_3}{\sqrt{6}}&
\frac{3\rho }{\sqrt{6}}
\end{array}\right)
\end{eqnarray*}
and
\begin{eqnarray*}
\Big(\frac{\partial(\rho,\rho U, T)}{\partial(\rho,\rho U, G)}\Big)^{-1}=
\left(\begin{array}{ccccc}
1&0&0&0&0\cr
-\frac{U_1}{\rho}&\frac{1}{\rho}&0&0&0\cr
-\frac{U_2}{\rho}&0&\frac{1}{\rho}&0&0\cr
-\frac{U_3}{\rho}&0&0&\frac{1}{\rho}&0\cr
A&B_1&B_2&B_3&C
\end{array}\right),
\end{eqnarray*}
where
\begin{eqnarray*}
A&=&\frac{2|U|^2-(|U|^2+3T-3)\Big[1+\frac{U_1+U_2+U_3}{\rho}\Big]}{3\rho-(|U|^2+3T-3)},\cr
B_i&=&-\frac{2\rho U_i-(|U|^2+3T-3)}{\rho(3\rho-(|U|^2+3T-3))},\cr
C&=&\frac{\sqrt{6}}{3\rho-(|U|^2+3T-3)}.
\end{eqnarray*}
\end{lemma}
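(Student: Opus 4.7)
The statement is a direct partial-derivative computation; I would split the argument into assembling the forward Jacobian and then inverting it.

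\textbf{Step 1 (forward Jacobian).} The first four rows of $\partial(\rho,\rho U,G)/\partial(\rho,U,T)$ are immediate from $\rho=\rho$ and $(\rho U)_i=\rho U_i$: differentiation gives a $1$ in position $(1,1)$, a $U_i$ in position $(i+1,1)$, and a $\rho$ in position $(i+1,i+1)$, with all other entries in these rows vanishing. For the bottom row I would rewrite the definition as $G=\frac{\rho|U|^2+3\rho T-3\rho}{\sqrt 6}$ and differentiate term by term to get $\partial_\rho G=\frac{|U|^2+3T-3}{\sqrt 6}$, $\partial_{U_i}G=\frac{2\rho U_i}{\sqrt 6}$, and $\partial_T G=\frac{3\rho}{\sqrt 6}$. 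These combine into the matrix displayed in the lemma.

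\textbf{Step 2 (inverse Jacobian).} I would obtain the inverse by exhibiting the inverse change of variables explicitly. Setting $m_i=\rho U_i$, the relation $\sqrt 6\,G=\rho|U|^2+3\rho T-3\rho$ can be solved for $T$ as $T=1+\frac{\sqrt 6\,G}{3\rho}-\frac{|m|^2}{3\rho^{2}}$, while $U_i=m_i/\rho$. Partial derivatives of these closed-form expressions with respect to $(\rho,m_1,m_2,m_3,G)$ produce the first four rows of the inverse Jacobian immediately (the $-U_i/\rho$ and $1/\rho$ entries) and yield the last row as well. As a check, one can alternatively use $J^{-1}=(\det J)^{-1}\mathrm{adj}(J)$: the near-block-triangular structure of the forward matrix gives $\det J=3\rho^{4}/\sqrt 6$ by expansion along the first row, and every cofactor is a $4\times 4$ minor that is either block-lower-triangular or contains at most one nontrivial off-diagonal column, so each can be evaluated by inspection.

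\textbf{Main obstacle.} No conceptual difficulty is expected; the only delicate point is the algebraic bookkeeping needed to repackage the entries of $J^{-1}$ into the precise rational form stated in the lemma, in which combinations like $3\rho-(|U|^2+3T-3)$ and $2\rho U_i-(|U|^2+3T-3)$ appear in the denominators and numerators of $A$, $B_i$, and $C$. I would systematically use the identity $\sqrt 6\,G/\rho=|U|^2+3T-3$ to trade $G$ for hydrodynamic quantities, then group the resulting terms over a common denominator; the claimed expressions should emerge from this reorganization without any further input.
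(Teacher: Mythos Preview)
Your approach is precisely what the paper intends: its entire proof reads ``It can be verified by a straightforward, but very tedious and lengthy calculation. We omit the proof.'' Your Step~1 and Step~2 are the natural way to carry this out, and both the direct inversion via $U_i=m_i/\rho$, $T=1+\sqrt{6}\,G/(3\rho)-|m|^2/(3\rho^2)$ and the adjugate check are sound.

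One caution on your ``main obstacle'': do not expect the bookkeeping to reproduce the displayed $A,B_i,C$ verbatim. Your explicit formula gives, for instance, $\partial T/\partial G=\sqrt{6}/(3\rho)$, whereas the lemma records $C=\sqrt{6}/(3\rho-(|U|^2+3T-3))$; these agree at the normalized state $(\rho,U,T)=(1,0,1)$ but not in general, and there is a visible slip already in the forward matrix (the $(5,4)$ entry should be $2\rho U_3/\sqrt{6}$, matching your $\partial_{U_3}G$). The paper never uses the exact entries away from that normalized point---only the value at $\theta=0$ in Proposition~\ref{Linearized_Maxwellian} and the qualitative rational structure $(\mathcal H_{\mathcal M})$---so these discrepancies are harmless for the sequel, but you should not spend effort trying to force your correct expressions into the stated form.
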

\begin{proof}
It can be verified by a straightforward, but very tedious and lengthy calculation. We omit the proof.
\end{proof}
%
%
%
%
Before we proceed to the next lemma, we define an operator which plays an important role in the theory of
kinetic equations:
\begin{definition}\label{macroscopic.projection}
The macroscopic projection is defined by
\[
Pf\equiv\sum^5_{i=1}\langle f,e_i\rangle e_i,
\]
where $\{e_i\}$ is an orthonormal basis for five- dimensional linear space
spanned by $\{\sqrt{m},v_1\sqrt{m},$\newline$v_2\sqrt{m},v_3\sqrt{m}, |v|^2\sqrt{m}\}$:
\begin{eqnarray}
\left\{\begin{array}{l}
e_1=\sqrt{m},\cr
e_2=v_1\sqrt{m},\cr
e_3=v_2\sqrt{m},\cr
e_4=v_3\sqrt{m},\cr
e_5=\frac{|v|^2-3}{\sqrt{6}}\sqrt{m}.
\end{array}\right.
\end{eqnarray}
\end{definition}
\begin{proposition}\label{Linearized_Maxwellian}
Let $F=m+\sqrt{m}f$. Then the local Maxwellian $\mathcal{M}(F)$ can be linearized around a global Maxwellian $m$ as follows
\begin{eqnarray*}
\mathcal{M}(F)=m+Pf\sqrt{m}+\sum_{1\leq i,j\leq 3}\Big(\int^1_0\big\{D^2_{(\rho_{\theta},\rho_{\theta} U_{\theta}, G_{\theta})}\mathcal{M}(\theta)\big\}(1-\theta)^2d\theta\Big)
\langle f,e_i\rangle\langle f,e_j\rangle.
\end{eqnarray*}
Here $\mathcal{M}(\theta)$ denotes
\begin{eqnarray*}
\mathcal{M}(\theta)=\frac{\rho_{\theta}}{\sqrt{(2\pi T_{\theta})^3}}e^{-\frac{|v-U_{\theta}|^2}{2T_{\theta}}},
\end{eqnarray*}
where $\rho_{\theta}, U_{\theta}, T_{\theta}$ are defined by the following relations:
\begin{align}
\begin{aligned}\label{Macro_Theta}
&\rho_{\theta}=\theta\rho+(1-\theta)1,\cr
&\rho_{\theta} U_{\theta}=\theta \rho U,\cr
&\frac{\rho_{\theta}|U_{\theta}|^2+3\rho_{\theta} T_{\theta}}{2}-\frac{3}{2}\rho_{\theta}
=\theta\Big\{\frac{\rho|U|^2+3\rho T}{2}-\frac{3}{2}\rho\Big\}.
\end{aligned}
\end{align}
\end{proposition}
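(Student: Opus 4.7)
The plan is to Taylor expand $\mathcal{M}(F)$ about the reference state associated with $m$, viewing $\mathcal{M}$ as a function of the macroscopic moments and then specializing to $F=m+\sqrt{m}f$. The critical observation is that although $\mathcal{M}$ depends nonlinearly on $(\rho,U,T)$, a \emph{linear} parametrization is obtained by switching to the coordinates $(\rho,\rho U,G)$ from Lemma \ref{Jacobian}: the identity $\int_{\bbr^3}|v|^2 F\,dv=\rho|U|^2+3\rho T$ turns $G$ into $\frac{1}{\sqrt{6}}\int(|v|^2-3)F\,dv$, and together with $\rho=\int F\,dv$ and $\rho U=\int vF\,dv$ this makes $(\rho,\rho U,G)$ a linear functional of $F$. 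Plugging in $F=m+\sqrt{m}f$ and reading against the orthonormal basis of Definition \ref{macroscopic.projection} yields the clean identities
$$\rho-1=\langle f,e_1\rangle,\quad \rho U_i=\langle f,e_{i+1}\rangle\ (i=1,2,3),\quad G=\langle f,e_5\rangle.$$
Moreover, inspection of (\ref{Macro_Theta}) shows that $(\rho_\theta,\rho_\theta U_\theta,G_\theta)$ is precisely the segment $(1,0,0)+\theta(\rho-1,\rho U,G)$, i.e.\ the straight line in the $(\rho,\rho U,G)$ coordinates from the global equilibrium to the perturbed macroscopic state.

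With the linear path thus identified, I would apply Taylor's formula with integral remainder to $\mathcal{M}$ as a function of $z=(\rho,\rho U,G)\in\bbr^5$ along this segment. The zeroth-order term at $z=(1,0,0)$ is $\mathcal{M}(1,0,1)=m$. The first-order term is $\sum_{i=1}^{5}\langle f,e_i\rangle\,\partial_{z_i}\mathcal{M}\big|_{(1,0,0)}$, which must be evaluated through the chain rule using the inverse Jacobian from Lemma \ref{Jacobian}. At the global equilibrium one has $|U|^2+3T-3=0$, so the entries $A$ and $B_i$ both vanish and $C=\sqrt{6}/3$; combining this with the elementary computations $\partial_\rho\mathcal{M}=m$, $\partial_{U_i}\mathcal{M}=v_i m$, $\partial_T\mathcal{M}=\tfrac{|v|^2-3}{2}m$ at $(\rho,U,T)=(1,0,1)$ gives
$$\partial_\rho\mathcal{M}=m,\quad \partial_{\rho U_i}\mathcal{M}=v_i m,\quad \partial_G\mathcal{M}=\tfrac{|v|^2-3}{\sqrt{6}}m.$$
Since $e_1\sqrt{m}=m$, $e_{i+1}\sqrt{m}=v_i m$ and $e_5\sqrt{m}=\tfrac{|v|^2-3}{\sqrt{6}}m$, the first-order term collapses to $\sqrt{m}\sum_{i=1}^5\langle f,e_i\rangle e_i=\sqrt{m}\,Pf$. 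The remainder integral, with the second derivatives taken along the interpolating state $\mathcal{M}(\theta)$, then produces the stated double sum because each coordinate of $z-(1,0,0)$ equals some $\langle f,e_i\rangle$.

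The main obstacle is the identification of the first-order coefficient with $\sqrt{m}\,Pf$. The derivatives of $\mathcal{M}$ with respect to $(\rho,U,T)$ are elementary, but they must be converted into derivatives with respect to $(\rho,\rho U,G)$ through the non-trivial chain rule of Lemma \ref{Jacobian}, which couples $\partial_T$ with $\partial_\rho$ and $\partial_{U_i}$ via the entries $A$ and $B_i$. These couplings must be seen to vanish at the global equilibrium (they do, because $|U|^2+3T-3=0$ there), and once this is established the matching with $e_1,\dots,e_5$ is forced by the particular normalization chosen for $e_5$. The remaining algebra -- writing the second-order remainder as an integral against $(1-\theta)^2$ along the interpolating path and organizing it as a finite sum indexed by the five macroscopic modes -- is then routine.
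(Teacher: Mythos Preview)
Your proposal is correct and follows essentially the same route as the paper: both parametrize the local Maxwellian by the linear coordinates $(\rho,\rho U,G)$, Taylor expand along the straight segment from $(1,0,0)$ to $(\rho,\rho U,G)$, use the inverse Jacobian of Lemma~\ref{Jacobian} (which at the equilibrium reduces to the identity in the first four slots and $\sqrt{6}/3$ in the last) to identify the first-order term with $\sqrt{m}\,Pf$, and collect the second derivatives into the integral remainder. Your explicit identification $(\rho-1,\rho U,G)=(\langle f,e_1\rangle,\dots,\langle f,e_5\rangle)$ and the observation that $A=B_i=0$ at equilibrium because $|U|^2+3T-3=0$ are exactly the computations the paper performs in its display for $f'(0)$.
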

\begin{proof}
We define $f(\theta)$ as follows
\begin{eqnarray*}
f(\theta)&\equiv&\mathcal{M}\Big(\theta\Big(~\rho,~\rho U,~\frac{\rho|U|^2+3\rho T}{\sqrt{6}}-\frac{3\rho}{\sqrt{6}}~\Big)+(1-\theta)(1,0,0)\Big)\cr
&\equiv&\mathcal{M}\big(\rho_{\theta},\rho_{\theta}U_{\theta},
G_{\theta}\big)\cr
&\equiv&\mathcal{M}(\theta).
\end{eqnarray*}
We note that $f$ represents the transition from the global Maxwellian $m$ to the local Maxwellian $\mathcal{M}$:
\begin{eqnarray}\label{transition}
f(1)=\frac{\rho}{\sqrt{(2\pi T)^3}}e^{-\frac{|v-U|^2}{2T}}~\mbox{ and }~f(0)=\frac{1}{\sqrt{(2\pi)^3}}e^{-\frac{|v|^2}{2}}.
\end{eqnarray}
We then apply Taylor's theorem around $\theta=0$  to see
\begin{eqnarray}\label{TaylorExpansion}
f(1)=f(0)+f^{\prime}(0)+\int^1_0f^{\prime\prime}(\theta)(1-\theta)^2d\theta.
\end{eqnarray}
(i) $f^{\prime}(0)$: We have from Lemma \ref{Jacobian} and the chain rule
\begin{align}
\begin{aligned}\label{fprime}\nonumber
f^{\prime}(0)&=\frac{d}{d\theta}\mathcal{M}(\theta(\rho,\rho U,G)+(1-\theta)(1,0,0))\Big|_{\theta=0}\cr
&=(\rho-1,\rho U,G) D_{(\rho_{\theta},\rho_{\theta} U_{\theta}, G_{\theta})}\mathcal{M}(\theta(\rho,\rho U,G)-(1-\theta)(1,0,0))\Big|_{\theta=0}\cr
&=(\rho-1,\rho U,G)\cdot\Big(\frac{\partial(\rho_{\theta},\rho_{\theta} U_{\theta},G_{\theta})}{\partial(\rho_{\theta},\rho_{\theta}, T_{\theta})}\Big)^{-1}
\Big(\nabla_{(\rho_{\theta},U_{\theta},T_{\theta})}\mathcal{M}\Big)^T\Big|_{\theta=0}\cr
&=(\rho-1,\rho U,G)\left(\begin{array}{ccccc}
1&0&0&0&0\\
0&1&0&0&0\\
0&0&1&0&0\\
0&0&0&1&0\\
0&0&0&0&\frac{\sqrt{6}}{3}
\end{array}
\right)
\left(\begin{array}{c}
1\cr
v_1\cr
v_2\cr
v_3\cr
\frac{|v|^2-3}{2}
\end{array}
\right)m\cr
&=(\rho-1) m+(\rho U) vm+G\frac{|v|^2-3}{\sqrt{6}}m\cr
&=\Big(\int f \sqrt{m}dv\Big) m+\Big(\int f \sqrt{m}dv\Big) vm+\Big(\int f\frac{|v|^2-3}{\sqrt{6}}\sqrt{m}dv\Big)\frac{|v|^2-3}{\sqrt{6}}m\cr
&=Pf\sqrt{m}.
\end{aligned}
\end{align}
(ii) $\displaystyle\int^1_0f^{\prime\prime}(\theta)(1-\theta)^2d\theta$: We have from the chain rule
\begin{eqnarray*}
f^{\prime\prime}(\theta)&=&\frac{d^2\mathcal{M}}{d\theta^2}(\theta(\rho-1,\rho U,G)-(1-\theta)(1,0,0))\cr
&=&(\rho-1,\rho U, G)\Big\{D^2_{(\rho_{\theta},\rho_{\theta} U_{\theta}, G_{\theta})}\mathcal{M}(\theta)\Big\}(\rho-1,\rho U, G)^T.
\end{eqnarray*}
We then substitute (i) and (ii) into (\ref{TaylorExpansion}) to obtain the desired result.
\end{proof}
We now consider the linearization of the collision frequency.
%
%
%
%
\begin{proposition}\label{linearized.collision.frequency}
The collision frequency can be linearized around the normalized global Maxwellian as follows.
\begin{eqnarray*}
\nu=\nu_c+\nu_p,
\end{eqnarray*}
where
\begin{eqnarray*}
\nu_c=\Big(\frac{3}{2}\Big)^{\omega}\mbox{ and }~\nu_p=\sum_i\langle f,e_i\rangle\int^1_0D_{(\rho_{\theta},\rho_{\theta} U_{\theta},G_{\theta})}\nu(\theta)(1-\theta)d\theta.
\end{eqnarray*}
Here $\nu(\theta)$ denotes
\[
\nu(\theta)=\rho^{\eta}_{\theta}\hspace{0.1cm}T^{\omega}_{\theta},
\]
where $\rho_{\theta}, U_{\theta}, T_{\theta}$ are defined as in the previous proposition.
\end{proposition}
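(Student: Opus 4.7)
The plan is to mimic the approach already used in Proposition \ref{Linearized_Maxwellian}, but now applied to the scalar quantity $\nu$, so that only a first-order expansion is needed rather than a second-order one. First I would introduce the one-parameter family
\[
\nu(\theta) = \rho_\theta^{\eta}\, T_\theta^{\omega},
\]
with $(\rho_\theta, U_\theta, T_\theta)$ the interpolating macroscopic data defined by (\ref{Macro_Theta}); by construction $\nu(0)$ is the value of $\nu$ at the global Maxwellian (yielding the stated constant $\nu_c$), while $\nu(1) = \rho^{\eta} T^{\omega}$ is the full collision frequency of $F$.

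Next I would apply Taylor's theorem in the integral-remainder form used in the preceding proposition,
\[
\nu(1) = \nu(0) + \int_0^1 \nu'(\theta)(1-\theta)^{k}\, d\theta
\]
(with the weight/normalization appropriate to the chosen remainder), and compute $\nu'(\theta)$ via the chain rule. The natural variables to differentiate in are the conservative variables $(\rho_\theta, \rho_\theta U_\theta, G_\theta)$, since along the interpolation these depend affinely on $\theta$ with constant $\theta$-derivative equal to the fixed vector $(\rho - 1, \rho U, G)$. Hence
\[
\nu'(\theta) = D_{(\rho_\theta, \rho_\theta U_\theta, G_\theta)} \nu(\theta) \cdot (\rho - 1, \rho U, G)^{T}.
\]
Because $\nu$ is originally given in terms of $(\rho, U, T)$, I would pass to these conservative variables through the inverse Jacobian supplied by Lemma \ref{Jacobian}.

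Finally, I would identify the five components of $(\rho - 1, \rho U, G)$ with the moments $\langle f, e_i \rangle$ using $F = m + \sqrt{m}\, f$ and the explicit form of the basis $\{e_i\}$, exactly as in the last lines of the proof of Proposition \ref{Linearized_Maxwellian}. Since these moments are independent of $\theta$, they pull out of the integral, and collecting everything yields the stated decomposition $\nu = \nu_c + \nu_p$.

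I do not anticipate any serious obstacle: the argument reduces to one-dimensional Taylor expansion plus the chain rule, with the only real bookkeeping being the change of variables, which has already been packaged in Lemma \ref{Jacobian}. The smoothness of $\theta \mapsto \nu(\theta)$ is immediate provided $\rho_\theta$ and $T_\theta$ stay positive along the interpolation, which holds automatically in the near-Maxwellian regime where $f$ is small, so this step is not a genuine analytic difficulty but only a smallness assumption to be carried along.
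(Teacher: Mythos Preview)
Your proposal is correct and matches the paper's approach exactly: the paper simply states that the proof is almost identical to that of Proposition~\ref{Linearized_Maxwellian} and omits it, and what you have outlined---defining the one-parameter family $\nu(\theta)$, applying Taylor's theorem with integral remainder, computing $\nu'(\theta)$ via the chain rule in the conservative variables $(\rho_\theta,\rho_\theta U_\theta,G_\theta)$ using Lemma~\ref{Jacobian}, and identifying $(\rho-1,\rho U,G)$ with the moments $\langle f,e_i\rangle$---is precisely that argument carried out at first order rather than second.
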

\begin{proof}
Since the proof is almost identical to the previous one, We omit it.
\end{proof}
Since the exact form of $D^2_{(\rho_{\theta},\rho_{\theta} U_{\theta}, G_{\theta})}\mathcal{M}(\theta)$ is too complicated to be written down and
manipulated explicitly, we introduce
generic notations which considerably simplifies the argument. We first note from the chain rule
\begin{eqnarray*}
D^2_{(\rho,\rho U, G)}\mathcal{M}(\theta)&=&\nabla_{(\rho_{\theta},\rho_{\theta} U_{\theta},G_{\theta})}
\left(
\begin{array}{c}
\frac{1}{\rho_{\theta}}\cr
\frac{v-U_{\theta1}}{\rho_{\theta} T_{\theta}}-\frac{U_{\theta1}}{\rho^2_{\theta}}\cr
\frac{v-U_{\theta2}}{\rho_{\theta} T_{\theta}}-\frac{U_{\theta2}}{\rho^2_{\theta}}\cr
\frac{v-U_{\theta3}}{\rho_{\theta} T_{\theta}}-\frac{U_{\theta3}}{\rho^2_{\theta}}\cr
\frac{A}{\rho_{\theta}}+B\cdot \frac{v-T_{\theta}}{T_{\theta}}+C\frac{|v-U_{\theta}|^2-3T_{\theta}}{2T^2_{\theta}}
\end{array}
\right)\mathcal{M}(\theta)\cr\newline\newline
&=&\Big(\frac{\partial(\rho_{\theta}, \rho_{\theta} U_{\theta}, G_{\theta})}{\partial(\rho_{\theta}, U_{\theta}, T_{\theta})}\Big)^{-1}\left(
\begin{array}{c}
\nabla_{(\rho_{\theta},U_{\theta},T_{\theta})}\Big(\frac{1}{\rho_{\theta}}\Big)\cr
\nabla_{(\rho_{\theta},U_{\theta},T_{\theta})}\Big(\frac{v-U_{\theta1}}{\rho_{\theta} T}-\frac{U_{\theta 1}}{\rho^2_{\theta}}\Big)\cr
\nabla_{(\rho_{\theta},U_{\theta},T_{\theta})}\Big(\frac{v-U_{\theta2}}{\rho_{\theta} T_{\theta}}-\frac{U_{\theta2}}{\rho^2_{\theta}}\Big)\cr
\nabla_{(\rho_{\theta},U_{\theta},T_{\theta})}\Big(\frac{v-U_{\theta3}}{\rho_{\theta} T_{\theta}}-\frac{U_{\theta3}}{\rho^2_{\theta}}\Big)\cr
\nabla_{(\rho_{\theta},U_{\theta},T_{\theta})}\Big(\frac{A}{\rho_{\theta}}+B\cdot \frac{v-T_{\theta}}{T_{\theta}}+C\frac{|v-U_{\theta}|^2-3T_{\theta}}{2T_{\theta}^2}\Big)
\end{array}
\right)\mathcal{M}(\theta),
\end{eqnarray*}
where $A$, $B$, $C$ are rational functions of macroscopic fields defined in Lemma \ref{Jacobian}.
Therefore, we can deduce from Lemma \ref{Jacobian} that there exist polynomials $P^{\mathcal{M}}_{i,j}$, $R^{\mathcal{M}}_{i,j}$ such that
\begin{eqnarray*}
D^2_{(\rho_{\theta},\rho_{\theta} U_{\theta}, G_{\theta})}\mathcal{M}(\theta)
=\sum_{i,j}\frac{P^{\mathcal{M}}_{ij}(\rho_{\theta},v-U_{\theta},U_{\theta},T_{\theta})}{R^{\mathcal{M}}_{ij}(\rho_{\theta},
T_{\theta},\mathcal{G}_{\theta})}
e^{-\frac{|v-U_{\theta}|^2}{2T_{\theta}}},
\end{eqnarray*}
where
\[
\mathcal{G}_{\theta}\equiv3+3\rho_{\theta}-|U|_{\theta}^2-3T_{\theta}\rho_{\theta}
\]
and $P^{\mathcal{M}}_{ij}(x_1,\ldots ,x_n)$ and $R^{\mathcal{M}}_{ij}(x_1,\ldots,x_n)$ satisfy the following structural assumptions
${\bf(\mathcal{H}_{\mathcal{M}}})$:
\begin{itemize}
\item $(\mathcal{H}_{\mathcal{M}}1)$ $P^{\mathcal{M}}_{ij}$ is a polynomial such that $P_{ij}(0,0,\cdots, 0)=0.$
\item $(\mathcal{H}_{\mathcal{M}}2)$ $R^{\mathcal{M}}_{ij}$ is a monomial.
\end{itemize}
More precisely, we have for a multi-index $m=(m_1,m_2,\cdots,m_3)$
\begin{itemize}
\item $(\mathcal{H}_{\mathcal{M}}1)$~$P^{\mathcal{M}}_{ij}(x_1,\ldots ,x_n)=\sum_m a_{m}x^{m_1}_1x^{m_2}_2\cdots x^{m_n}_n, \mbox{ where }a_0=0$,
\item $(\mathcal{H}_{\mathcal{M}}2)$~$R^{\mathcal{M}}_{ij}(x_1,\ldots ,x_n)= a_mx^{m_1}_1x^{m_2}_2\cdots x^{m_n}_n$.
\end{itemize}
From now on, we assume that $P^{\mathcal{M}}_{ij}$ and  $R^{\mathcal{M}}_{ij}$ are defined generically,
which means the exact form may change from line to line. These generic notations simplify the
calculation drastically and cause no problems if we only keep in mind the structural assumption $(\mathcal{H}_{\mathcal{M}})$ at each step.
We now  simplify the notation further by defining
\[
Q^{\mathcal{M}}_{i,j}=\frac{P^{\mathcal{M}}_{ij}(\rho_{\theta},U_{\theta},T_{\theta},v-U_{\theta})}{R^{\mathcal{M}}_{ij}(\rho_{\theta},T_{\theta},\mathcal{G}_{\theta})}
\]
and
\begin{eqnarray*}
\mathcal{Q}^{\mathcal{M}}_{i,j}
=\int^1_0Q^{\mathcal{M}}_{ij}e^{-\frac{|v-T_{\theta}|^2}{2T_{\theta}}+\frac{|v|^2}{4}}(1-\theta)^2d\theta
\end{eqnarray*}
to see from Proposition \ref{Linearized_Maxwellian}
\begin{eqnarray}\label{Mf=}
\mathcal{M}(F)=m+\sqrt{m}Pf+\sqrt{m}\sum_{i,j}\mathcal{Q}^{\mathcal{M}}_{i,j}\langle f,e_i\rangle\langle f,e_j\rangle.
\end{eqnarray}
By the exactly same argument, we write the collision frequency as follows:
\begin{eqnarray}\label{nu=}
\nu=\nu_c+\sum_i\mathcal{Q}^{\nu}_i\langle f,e_i\rangle,
\end{eqnarray}
where
\begin{eqnarray*}
\mathcal{Q}^{\nu}_i&\equiv&\sum_i\langle f,e_i\rangle\int^1_0 Q^{\nu}_i(1-\theta)d\theta\cr
&\equiv&\sum_i\langle f,e_i\rangle\int^1_0D_{(\rho_{\theta},\rho_{\theta} U_{\theta},G_{\theta})}\nu(\theta)(1-\theta)d\theta.
\end{eqnarray*}
We again introduce generic polynomials $ P^{\nu}_{i}(x_1,\ldots ,x_n)$ and $R^{\nu}_{i}(x_1,\ldots,x_n)$ such that:
\[
D_{(\rho_{\theta},\rho_{\theta} U_{\theta},G_{\theta})}\nu(\theta)=\frac{P^{\nu}_{ij}(\rho_{\theta},U_{\theta},T_{\theta})}{R^{\nu}_{i}(\rho_{\theta},T_{\theta},\mathcal{G}_{\theta})}
\]
and assume they satisfy the same structural assumptions.
\begin{align}
\begin{aligned}\label{P.Q2}
(\mathcal{H}_{\nu}1):~P^{\nu}_{i}(x_1,\ldots ,x_n)&=\sum_m a_{m}x^{m_1}_1x^{m_2}_2\cdots x^{m_n}_n, \mbox{ where }a_0\neq0,\cr
(\mathcal{H}_{\nu}2):~R^{\nu}_{i}(x_1,\ldots ,x_n)&= Cx^{m_1}_1x^{m_2}_2\cdots x^{m_n}_n.
\end{aligned}
\end{align}
We summarize the argument so far in the following proposition.
\begin{proposition}\label{linearized.relaxation}The relaxation operator is linearized around $m$ as follows.
\begin{eqnarray*}
\frac{\nu}{\sqrt{m}}\big(\mathcal{M}(F)-F\big)=(\nu_c+\nu_p)\Big\{ \big(Pf-f\big)+\sum_{i,j}\mathcal{Q}^{\mathcal{M}}_{i,j}\langle f,e_i\rangle\langle f,e_j\rangle\Big\}.
\end{eqnarray*}
\end{proposition}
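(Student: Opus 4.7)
The plan is to assemble this proposition by direct substitution from the linearizations already established in Proposition \ref{Linearized_Maxwellian} and Proposition \ref{linearized.collision.frequency}, repackaged in the generic notation of (\ref{Mf=}) and (\ref{nu=}). Concretely, I start from the perturbation ansatz $F = m + \sqrt{m}f$ and compute $\mathcal{M}(F)-F$ by subtracting $F$ from the identity (\ref{Mf=}).

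First I would observe that the constant Maxwellian terms cancel: the $m$ coming from $\mathcal{M}(F)$ in (\ref{Mf=}) is exactly the $m$ appearing in $F$, so
\[
\mathcal{M}(F)-F = \sqrt{m}\bigl(Pf-f\bigr)+\sqrt{m}\sum_{i,j}\mathcal{Q}^{\mathcal{M}}_{i,j}\langle f,e_i\rangle\langle f,e_j\rangle.
\]
Dividing through by $\sqrt{m}$ eliminates the weight uniformly and leaves the expression $(Pf-f)+\sum_{i,j}\mathcal{Q}^{\mathcal{M}}_{i,j}\langle f,e_i\rangle\langle f,e_j\rangle$ inside the braces.

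Next I would multiply by the collision frequency. By (\ref{nu=}) and the definition of $\nu_p$ in Proposition \ref{linearized.collision.frequency}, we have the clean splitting $\nu=\nu_c+\nu_p$ with $\nu_c=(3/2)^{\omega}$ a positive constant and $\nu_p$ collecting the linear-and-higher fluctuations $\sum_i \mathcal{Q}^{\nu}_i\langle f,e_i\rangle$. Inserting this factorization in front of the bracketed expression yields exactly the identity claimed in Proposition \ref{linearized.relaxation}.

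I do not anticipate any serious obstacle: the only delicate point is notational hygiene, namely making sure that the generic polynomial notation $\mathcal{Q}^{\mathcal{M}}_{i,j}$ and $\mathcal{Q}^{\nu}_i$ is used in a manner consistent with the structural hypotheses $(\mathcal{H}_{\mathcal{M}})$ and $(\mathcal{H}_{\nu})$ so that these coefficients may continue to absorb bounded smooth quantities in later estimates. Since the statement is a bookkeeping corollary of the two preceding Taylor expansions, the proof is just the substitution above; the real analytic content has already been carried by Proposition \ref{Linearized_Maxwellian} (through Lemma \ref{Jacobian} and the Taylor remainder) and Proposition \ref{linearized.collision.frequency}.
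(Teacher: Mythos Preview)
Your proposal is correct and matches the paper's approach exactly: the paper presents this proposition explicitly as a summary of the preceding computations (``We summarize the argument so far in the following proposition''), and the content is precisely the substitution of (\ref{Mf=}) and (\ref{nu=}) into $\frac{\nu}{\sqrt{m}}(\mathcal{M}(F)-F)$ after writing $F=m+\sqrt{m}f$, just as you describe.
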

We now substitute the standard perturbation $F=m+\sqrt{m}f$ into (\ref{main.1})
and apply Proposition \ref{Linearized_Maxwellian}, \ref{linearized.collision.frequency} and
\ref{linearized.relaxation} to obtain the perturbed Boltzmann-BGK model:
\begin{eqnarray*}\label{Linearized_Main}
\displaystyle\frac{\partial f}{\partial t}  + v \cdot \nabla_x f&=&Lf+\Gamma f,\cr
\displaystyle f(x,v,0)&=&f_0(x,v), \qquad  (x,v,t) \in \bbt^3 \times \bbr^3 \times \bbr_+,
\end{eqnarray*}
where $f_0(x,v)=\frac{F_0-m}{\sqrt{m}}$. The linearized relaxation operator $L$ is given by
\begin{eqnarray}
Lf&=&\nu_c\big(Pf-f\big),
\end{eqnarray}
and the nonlinear perturbation $\Gamma(f)$ is defined as follows:
\begin{eqnarray*}
\Gamma (f)&=&\nu_p Lf+(\nu_c+\nu_p)\sum_{1\leq i,j\leq 5}\mathcal{Q}^{\mathcal{M}}_{i,j}\langle f,e_i\rangle \langle f,e_j\rangle,\cr
&=&\Gamma_1(f,f)-\Gamma_2(f,f)+\Gamma_3(f,f)+\Gamma_4(f,f,f),
\end{eqnarray*}
where
\begin{eqnarray*}
\Gamma_1(f,g)&=&\nu_p Pf=\sum_{i,j} \mathcal{Q}^{\nu}_i\langle f,e_i\rangle\langle g,e_j\rangle e_j,\cr
\Gamma_2(f,g)&=&\nu_p f=\sum_{i,j} \mathcal{Q}^{\nu}_i\langle f,e_j\rangle g,\cr
\Gamma_3(f,g)&=&\nu_c\sum_{i,j} \mathcal{Q}^{\mathcal{M}}_{i,j}\langle f,e_i\rangle\langle g,e_j\rangle,\cr
\Gamma_4(f,g,h)&=&\nu_c\sum_{i,j,k} \mathcal{Q}^{\nu}_i\mathcal{Q}^{\mathcal{M}}_{j,k}\langle f,e_i\rangle\langle g,e_j\rangle
\langle h,e_k\rangle.
\end{eqnarray*}
%
%
%
%
First we note that the conservation laws (\ref{conservation.law.f}) now take the following form:
\begin{lemma} Suppose $f$ is a smooth solution of (\ref{Linearized_Main}). Then $f$ satisfies the following conservation laws.
\begin{align}
\begin{aligned}\label{conservation.law.f}
&\int_{\bbt^3\times\bbr^3}f\sqrt{m}dxdv=0,\cr
&\int_{\bbt^3\times\bbr^3}fv\sqrt{m}dxdv=0,\cr
&\int_{\bbt^3\times\bbr^3}f|v|^2\sqrt{m}dxdv=0.
\end{aligned}
\end{align}
\end{lemma}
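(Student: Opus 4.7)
The plan is to inherit these three identities directly from the nonlinear conservation laws (\ref{conservation.law.F}) via the perturbation ansatz $F=m+\sqrt{m}f$. Observe that $m=m(v)$ does not depend on $(x,t)$, so the integrals
\[
\int_{\bbt^3\times\bbr^3} m(v)\,\phi(v)\,dx\,dv,\qquad \phi\in\{1,v,|v|^2\},
\]
are fixed constants in time. Writing $F=m+\sqrt{m}f$ in (\ref{conservation.law.F}) and subtracting off these constants produces
\[
\frac{d}{dt}\int_{\bbt^3\times\bbr^3}\sqrt{m}\,f\,\phi(v)\,dx\,dv=0,\qquad \phi\in\{1,v,|v|^2\},
\]
which is exactly the time-invariance of the three quantities appearing in (\ref{conservation.law.f}). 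Hence each of them equals its initial value for all $t\geq 0$.

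The second step is to identify that initial value as zero. This is the standard compatibility assumption built into the perturbation framework: the initial datum $F_0=m+\sqrt{m}f_0$ is postulated to share the total mass, total momentum, and total kinetic energy of the reference Maxwellian $m$ (otherwise the perturbation would drive the solution away from $m$ at the level of conserved macroscopic moments, contradicting the small-perturbation regime). Under this normalization, $\int f_0\sqrt{m}\,\phi\,dx\,dv=0$ for $\phi\in\{1,v,|v|^2\}$, and combining with the previous step completes the argument.

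As an alternative, purely internal derivation, one may multiply the perturbed equation in Proposition \ref{linearized.relaxation} by $\sqrt{m}\,\phi(v)$ and integrate over $\bbt^3\times\bbr^3$. The transport term $v\cdot\nabla_x f$ becomes a pure $x$-divergence, so integrates to zero by periodicity. For the collisional piece, one uses that $Lf+\Gamma(f)=\nu\,(\mathcal{M}(F)-F)/\sqrt{m}$ by construction, together with the original cancellation identity (\ref{ConservationLaw}) applied with the test function $\sqrt{m}\,\phi$ reabsorbed, to conclude the right-hand side has zero moment against $\sqrt{m}\,\phi$. This again yields time-invariance; combined with the normalized initial data the claim follows.

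There is no real obstacle here: the argument is essentially bookkeeping, and the only non-cosmetic point is the initial-data normalization, which is the implicit convention throughout the paper. I would write out only the substitution-based derivation explicitly and refer to (\ref{ConservationLaw})–(\ref{conservation.law.F}) and the setup of the perturbation for the rest.
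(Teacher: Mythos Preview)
Your approach is correct and matches the paper's own treatment: the paper does not give an explicit proof but simply remarks that the conservation laws for $F$ in (\ref{conservation.law.F}) translate into this form under $F=m+\sqrt{m}f$, which is exactly your substitution argument together with the initial-data normalization (stated as a hypothesis in the main theorem). Your observation that the lemma as written only yields time-invariance and that the zero value requires the assumption on $f_0$ is a correct and worthwhile clarification.
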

We close this section by recalling the following important properties of the linearized Boltzmann-BGK model.
\begin{lemma} The macroscopic projection
\[
Pf\equiv\sum_{i=1}^{5}\langle f,e_i\rangle e_i
\]
is a compact operator from $L^2$ into $L^2$.
\end{lemma}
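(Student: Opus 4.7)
The plan is to exploit the finite-rank structure of $P$. By its very definition, $P$ is a sum of five rank-one operators $f \mapsto \langle f, e_i\rangle\, e_i$, so its range is contained in the five-dimensional subspace $\mathcal{N} = \mathrm{span}\{e_1,\ldots,e_5\}$. The entire proof amounts to checking boundedness and then invoking the classical fact that a bounded operator with finite-dimensional range is compact. (Here I read $P$, as in the proof of Proposition~2.1, as an operator acting in the $v$-variable, so that $\langle\cdot,e_i\rangle$ denotes the $L^2_v$ pairing with $x$ playing the role of a parameter.)

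First I would verify that each $e_i \in L^2_v$, which is immediate: each $e_i$ is the product of a polynomial of degree at most two in $v$ with $\sqrt{m}$, and the Gaussian factor absorbs the polynomial growth. Cauchy--Schwarz then shows that $f \mapsto \langle f,e_i\rangle$ is a bounded linear functional on $L^2_v$, and since the $e_i$ themselves lie in $L^2_v$, the sum $Pf = \sum_{i=1}^{5} \langle f,e_i\rangle\, e_i$ defines a bounded linear operator from $L^2_v$ into $\mathcal{N} \subset L^2_v$.

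Next I would recall the standard compactness argument for finite-rank operators. Given a bounded sequence $\{f_n\}\subset L^2_v$, the five scalar sequences $\{\langle f_n,e_i\rangle\}_{n}$, $i = 1,\ldots,5$, are all bounded in $\mathbb{R}$ by Cauchy--Schwarz, so by a finite Bolzano--Weierstrass / diagonal extraction one can pass to a subsequence $\{f_{n_k}\}$ along which each of the five coefficients converges. Since norm convergence of the coefficients in a fixed finite orthonormal system forces norm convergence of the linear combination in $L^2_v$, the image sequence $\{Pf_{n_k}\}$ converges in $L^2_v$, which establishes compactness.

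I do not foresee any genuine obstacle: the lemma is essentially the tautology that an orthogonal projection onto a finite-dimensional subspace is compact. The only point deserving explicit mention is the $L^2_v$ integrability of each $e_i$, which is ensured by the Gaussian weight $\sqrt{m}$; everything else is an invocation of standard Hilbert-space theory.
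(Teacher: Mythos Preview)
Your argument is correct. The paper's own proof is a one-line appeal to the Hilbert--Schmidt criterion: it notes that each summand $f\mapsto \langle f,e_i\rangle e_i$ is an integral operator whose kernel $e_i(v)e_i(w)$ lies in $L^2$, and hence is compact. You instead observe directly that $P$ has range contained in the five-dimensional space $\mathrm{span}\{e_1,\dots,e_5\}$, check boundedness via Cauchy--Schwarz and the Gaussian integrability of the $e_i$, and then invoke the fact that bounded finite-rank operators are compact. Your route is slightly more elementary, since finite rank is a strictly stronger (and more transparent) property than Hilbert--Schmidt, and you do not need to identify or estimate any integral kernel; the paper's route has the virtue of being shorter to state once one is willing to quote the Hilbert--Schmidt theorem. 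Both arguments ultimately rest on the same point you isolate explicitly, namely that each $e_i\in L^2_v$ thanks to the Gaussian factor $\sqrt{m}$.
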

\begin{proof}
It follows directly from the fact that the kernel of each integral operator lies in $L^2(\bbt^3\times\bbr^3)$
\end{proof}
\begin{lemma}\label{coercivity1}$L$ satisfies the following coercivity property.
\begin{eqnarray*}
\langle Lf,f\rangle= -\nu_c\|(I-P)f\|^2_{L^2_{x,v}}
\end{eqnarray*}
\end{lemma}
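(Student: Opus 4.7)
The plan is to exploit the fact that the operator $P$ defined in Definition \ref{macroscopic.projection} is literally the orthogonal projection onto the five-dimensional subspace $\mathrm{span}\{e_1,\ldots,e_5\}$, so that $Lf = -\nu_c(I-P)f$ is (up to the constant $-\nu_c$) an orthogonal projection as well.

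First I would rewrite the linearized operator directly from its definition as
\[
Lf = \nu_c(Pf - f) = -\nu_c(I-P)f,
\]
so that
\[
\langle Lf, f\rangle = -\nu_c\langle (I-P)f, f\rangle.
\]
Next, I would verify that $P$ is self-adjoint on $L^2_{x,v}$ and idempotent. Self-adjointness follows because, for any $f,g \in L^2_{x,v}$,
\[
\langle Pf,g\rangle = \sum_{i=1}^{5}\langle f,e_i\rangle\langle e_i,g\rangle = \langle f,Pg\rangle,
\]
where we used the fact that $\langle f,e_i\rangle$ is a real scalar depending only on $x$ (and the inner product is the spatial-velocity one). Idempotency $P^2=P$ follows from the orthonormality of $\{e_i\}$, since $\langle e_j,e_i\rangle = \delta_{ij}$ gives
\[
P(Pf) = \sum_{i} \Big\langle \sum_{j}\langle f,e_j\rangle e_j,\, e_i\Big\rangle e_i = \sum_{i}\langle f,e_i\rangle e_i = Pf.
\]

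Once $P$ is known to be an orthogonal projection, so is $I-P$, which gives the decomposition $f = Pf + (I-P)f$ into orthogonal components. Then
\[
\langle (I-P)f, f\rangle = \langle (I-P)f, (I-P)f\rangle + \langle (I-P)f, Pf\rangle = \|(I-P)f\|_{L^2_{x,v}}^2,
\]
the cross term vanishing by orthogonality of the ranges of $P$ and $I-P$. Substituting back yields the claimed identity.

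There is essentially no serious obstacle here; the only subtle point is the bookkeeping of the inner product. Since $e_i=e_i(v)$ is a pure velocity function while $\langle f,e_i\rangle$ is, in the expression $Pf = \sum_i \langle f,e_i\rangle e_i$ used in the macroscopic projection, understood as an $x$-dependent coefficient, one must be careful that the $L^2_{x,v}$ inner product is consistent with the abstract projection calculation above. This is indeed the case, because $Pf(x,v)=\sum_i\big(\int f(x,v')e_i(v')\,dv'\big)e_i(v)$ commutes with integration in $x$ and Fubini's theorem validates the self-adjointness computation verbatim in $L^2_{x,v}$.
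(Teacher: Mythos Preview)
Your proof is correct and follows essentially the same route as the paper: both arguments boil down to the orthogonality $\langle Pf,(I-P)f\rangle=0$, from which the identity is immediate. You are somewhat more explicit than the paper in verifying self-adjointness and idempotency of $P$, and in flagging the $x$-versus-$(x,v)$ bookkeeping in the inner product, but the underlying argument is the same.
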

\begin{proof}
By $\langle Pf,(I-P)f\rangle=0$, we have
\[
\langle Pf,f\rangle=\langle Pf,Pf\rangle=\|Pf\|^2_{L^2_{x,v}},
\]
which yields
\begin{eqnarray*}
\langle Lf,f\rangle&=&\nu_c\langle Pf,f\rangle-\nu_c\|f\|^2_{L^2_{x,v}}\cr
&=&-\nu_c\|(I-P)f\|^2_{L^2_{x,v}}.
\end{eqnarray*}
\end{proof}
\section{Main result}
%
%
%
%
%
We are finally in a position to state our main result.
\begin{theorem}
Let $N>4$ and $F_0=m+\sqrt{m}f_0\geq0$. Suppose that $f_0$ satisfies the conservation laws (\ref{conservation.law.f}). Then
there exist positive constants $C$, $M$, $\delta^*$ and $\delta_*$ such that if $E(0)<M$,
then there exists a unique global solution $f(x,v,t)$ to (\ref{Linearized_Main}) such that\newline
\indent$(1)$ The high order energy norm is uniformly bounded:
\[
E(t)\leq CE(0).
\]
\indent$(2)$ The perturbation decays exponentially fast:
\[
|||f(t)|||(t)\leq e^{-\delta^*t}\sqrt{E(0)}.
\]
\indent$(3)$ if $\bar f$ denotes another solution corresponding to initial data $\bar f_0$ satisfying the same
assumptions, then we have the following uniform $L^2$-stability estimate:
\[
\|f(t)-\bar f(t)\|_{L^2_{x,v}}\leq e^{-\delta_*t}\|f_0-\bar f_0\|_{L^2_{x,v}}.
\]
\end{theorem}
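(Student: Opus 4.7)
The plan is to combine the local existence and uniqueness established in Section 5 with an a priori high-order energy estimate, and then close via the standard continuity argument: if $E(t)$ stays below a small threshold on $[0,T]$ one shows $E(t)\leq CE(0)$, and taking $E(0)$ small enough bootstraps the threshold so the solution extends globally with uniform bound.

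For the a priori estimate, for each multi-index with $|\alpha|+|\beta|\leq N$ I would apply $\partial^{\alpha}_{\beta}$ to $\partial_t f+v\cdot\nabla_x f=Lf+\Gamma(f)$, take the $L^2_{x,v}$ inner product against $\partial^{\alpha}_{\beta}f$, and sum. The transport term integrates by parts cleanly on $\mathbb{T}^3$, while the velocity-derivative commutator $[v\cdot\nabla_x,\partial^{\alpha}_{\beta}]$ is lower order in $\alpha$ and is absorbed by Cauchy--Schwarz and Young's inequality. Lemma \ref{coercivity1} applied to each derivative yields the microscopic dissipation $-\nu_c\|(I-P)\partial^{\alpha}_{\beta}f\|^2_{L^2_{x,v}}$. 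The nonlinear contributions from $\Gamma_1,\ldots,\Gamma_4$ are products of macroscopic moments $\langle f,e_i\rangle$ against $\mathcal{Q}^{\mathcal{M}}_{i,j}$ and $\mathcal{Q}^{\nu}_i$; the structural assumptions $(\mathcal{H}_{\mathcal{M}})$ and $(\mathcal{H}_{\nu})$ combined with smallness of $E(t)$ keep $(\rho_{\theta},U_{\theta},T_{\theta})$ in a uniform neighborhood of $(1,0,1)$, so the denominators $R_{ij}$ stay bounded away from zero and the factor $e^{-|v-U_{\theta}|^2/(2T_{\theta})+|v|^2/4}$ decays uniformly in $\theta$ and $v$. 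Together with the Sobolev embedding $H^2(\mathbb{T}^3)\hookrightarrow L^{\infty}$ (available because $N>4$) and Leibniz, this produces nonlinear errors of cubic type $C\sqrt{E(t)}\,|||f(t)|||^2$, absorbable into the dissipation.

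The main obstacle is that Lemma \ref{coercivity1} controls only the microscopic part $(I-P)\partial^{\alpha}_{\beta}f$, whereas the full $|||f|||^2$ must appear in the dissipation to match the definition of $E(t)$. To recover the macroscopic piece I would carry out a macro--micro decomposition in the spirit of Guo: taking $e_i$-moments of the equation produces a fluid-type system for the coefficients $\langle f,e_i\rangle$ driven by $x$-derivatives of $(I-P)f$ and by $\Gamma$; coupling these moment identities with the zero-mean constraints (\ref{conservation.law.f}) and Poincar\'e's inequality on $\mathbb{T}^3$ upgrades the dissipation to $c_0|||f(t)|||^2$. The resulting differential inequality
\begin{equation*}
\frac{d}{dt}|||f(t)|||^2+c_0|||f(t)|||^2\leq C\sqrt{E(t)}\,|||f(t)|||^2,
\end{equation*}
together with its time-integrated form, yields the uniform bound $E(t)\leq CE(0)$ required for $(1)$ and closes the continuity argument.

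Part $(2)$ then follows by Gronwall once $E(0)$ is small enough that $C\sqrt{E(0)}<c_0/2$, giving the claimed rate $e^{-\delta^*t}$. For $(3)$ I would let $g=f-\bar f$, which satisfies $\partial_t g+v\cdot\nabla_x g=Lg+\Gamma(f)-\Gamma(\bar f)$. Pairing with $g$ in $L^2_{x,v}$, using Lemma \ref{coercivity1} and the $L^2$-level macro--micro estimate for the linear dissipation, and exploiting the bilinear structure of $\Gamma_1,\Gamma_2,\Gamma_3$ together with the trilinear structure of $\Gamma_4$ to bound $\langle\Gamma(f)-\Gamma(\bar f),g\rangle$ by $C\bigl(\sqrt{E(f)}+\sqrt{E(\bar f)}\bigr)\|g\|^2_{L^2_{x,v}}$, the smallness of both energies lets us absorb the nonlinear term into the dissipation and conclude exponential $L^2$-stability via Gronwall.
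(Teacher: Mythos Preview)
Your outline is correct and follows essentially the same route as the paper: local existence, a priori energy estimate with Guo-type macro--micro recovery of the hydrodynamic part, continuity argument to globalize, Gronwall for exponential decay, and the difference equation for $L^2$-stability.

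Two points deserve sharpening, and the paper handles them slightly differently from what you wrote. First, the transport commutator $[\partial_{\beta},v\cdot\nabla_x]$ trades a velocity derivative for a spatial one, producing $\partial^{\alpha+\bar e_i}_{\beta-e_i}f$; this has the \emph{same} total order $|\alpha|+|\beta|$ and is not ``lower order in $\alpha$,'' so it cannot be absorbed outright by Cauchy--Schwarz. Second, $L$ does not commute with $\partial_{\beta}$: one has $\partial^{\alpha}_{\beta}Lf=-\nu_c\partial^{\alpha}_{\beta}f+\nu_c\partial_{\beta}P\partial^{\alpha}f$, so pairing with $\partial^{\alpha}_{\beta}f$ yields the full dissipation $-\nu_c\|\partial^{\alpha}_{\beta}f\|^2$ plus a cross term controlled by $\|\partial^{\alpha}f\|$, rather than the microscopic piece $-\nu_c\|(I-P)\partial^{\alpha}_{\beta}f\|^2$ you wrote. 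The paper resolves both issues by an induction on $|\beta|$: the macro--micro coercivity (Theorem~\ref{coercivity2}) is invoked only at $\beta=0$, and for $|\beta|=m+1$ the commutator and cross terms are absorbed into the already-established good terms at $|\beta|\leq m$, with suitably large weights $\bar C_m$. With this adjustment your argument coincides with the paper's.
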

\begin{remark} Extension of these results to collision frequencies of the following form is straightforward.
\[
\nu_{\eta,\mu}(\rho,T)=\Big\{\sum^{m_{1}}_{i=0}a_i\rho^{\eta_i}(x,t)\Big\}\Big\{\sum^{m_2}_{j}b_jT^{\mu_j}(x,t)\Big\}.
\]
\end{remark}
\section{Preliminary estimates}
In this section, we present several estimates on macrosopic fields which are crucial to develop the argument further.
%
%
%
\begin{lemma}\label{ULBoundofField1}
Let $|\alpha|\geq 1$. Suppose $E(t)$ is sufficiently small. Then we have the following upper and lower bounds for
macroscopic fields:
\begin{eqnarray*}
&&(1)~1-\sqrt{E(t)}\leq\rho(x,t)\leq 1+\sqrt{E(t)},\cr
&&(2)~|U(x,t)|\leq 3\sqrt{E(t)},\cr
&&(3)~\frac{1}{2}\leq T(x,t)\leq \frac{3}{2},\cr
&&(4)~|\partial^{\alpha}\rho(x,t)|\leq \sqrt{E(t)},\cr
&&(5)~|\partial^{\alpha}U(x,t)|\leq C_{|\alpha|}E(t),\cr
&&(6)~|\partial^{\alpha}T(x,t)|\leq C_{|\alpha|}E(t),
\end{eqnarray*}
for some positive constant $C_{|\alpha|}$.
\end{lemma}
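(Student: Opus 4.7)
The strategy is to push the smallness of $f$ through the macroscopic moments, using Cauchy--Schwarz in $v$ (with Gaussian weights $\sqrt{m}$, $v\sqrt{m}$, $|v|^2\sqrt{m}$, each of finite $L^2_v$-norm) to reduce pointwise questions about the fields to the $L^2_{x,v}$-norm of $f$, and then Sobolev embedding in $x$ to upgrade $L^2_x$-bounds to $L^\infty_x$-bounds. Since $N>4$, there is room for two extra $x$-derivatives beyond the order of the bound, and the resulting $L^\infty_x$ bound is $C\,|||f|||\leq C\sqrt{E(t)}$. First I record the perturbative identities obtained from $F=m+\sqrt{m}f$ and the standard Gaussian moments $\int m\,dv=1$, $\int v\,m\,dv=0$, $\int|v|^2 m\,dv=3$:
\[
\rho-1=\int\sqrt{m}\,f\,dv,\qquad \rho U=\int v\sqrt{m}\,f\,dv,\qquad \int|v|^2 F\,dv-3=\int|v|^2\sqrt{m}\,f\,dv,
\]
together with the algebraic identity $3\rho T=\int|v|^2 F\,dv-\rho|U|^2$.

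Second, for every multi-index $\alpha$ allowed by the energy norm I differentiate under the $v$-integral and apply Cauchy--Schwarz in $v$, producing
\[
\|\partial^\alpha\rho\|_{L^2_x}+\|\partial^\alpha(\rho U)\|_{L^2_x}+\Big\|\partial^\alpha\!\int|v|^2\sqrt{m}\,f\,dv\Big\|_{L^2_x}\;\leq\; C\,\|\partial^\alpha f\|_{L^2_{x,v}}.
\]
Sobolev embedding $H^2(\bbt^3)\hookrightarrow L^\infty(\bbt^3)$ then upgrades these to $L^\infty_x$-bounds by $C\,|||f|||\leq C\sqrt{E(t)}$. This yields (1) and (4) directly, and gives the auxiliary bound $\|\rho U\|_{L^\infty_x}\lesssim\sqrt{E(t)}$. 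For $E(t)$ small the pointwise lower bound $\rho\geq 1-\sqrt{E(t)}\geq 1/2$ holds, and writing $U=(\rho U)/\rho$ proves (2).

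For (3) I use $T=(3+\int|v|^2\sqrt{m}\,f\,dv-\rho|U|^2)/(3\rho)$: the numerator is within $C\sqrt{E(t)}$ of $3$ and the denominator is within $3\sqrt{E(t)}$ of $3$, so $T$ stays in $[1/2,3/2]$ once $E(t)$ is small. For (5) and (6) I induct on $|\alpha|$. The Leibniz expansion of $\rho U=\rho\cdot U$ gives
\[
\rho\,\partial^\alpha U \;=\; \partial^\alpha(\rho U)\;-\;\sum_{0<\beta\leq\alpha}\binom{\alpha}{\beta}\,\partial^\beta\rho\;\partial^{\alpha-\beta}U,
\]
and an analogous identity, derived by differentiating $3\rho T=\int|v|^2 F\,dv-\rho|U|^2$, isolates $\partial^\alpha T$ against $\partial^\beta(\rho U)$, $\partial^\beta\rho$, lower-order $\partial^\beta U$ and $\partial^\beta T$. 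The base cases are (2) and (3); the moment estimates of the previous step control $\partial^\alpha(\rho U)$ and $\partial^\alpha\!\int|v|^2\sqrt{m}\,f\,dv$; dividing by $\rho\geq 1/2$ then closes the induction.

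The main obstacle is not any single inequality but the bookkeeping in this last step: one must expand the Leibniz formula carefully, observe that every summand on the right-hand side is a product in which each factor is either bounded by a fixed constant or by an appropriate power of $\sqrt{E(t)}$ (coming from either a direct $f$-moment or a previously bounded derivative), and then assemble these factors to close the induction with the stated constants $C_{|\alpha|}$. Everything else is a routine application of Cauchy--Schwarz in $v$ and the fixed Sobolev embedding in $x$, both uniform in $t$.
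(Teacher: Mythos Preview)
Your approach matches the paper's: write the moments $\rho-1$, $\rho U$, and $\int|v|^2F\,dv-3$ as $v$-integrals of $f$ against Gaussian weights, apply Cauchy--Schwarz in $v$, and then handle $U$ and $T$ by Leibniz/quotient-rule bookkeeping on the ratios. The only structural difference is that you make the Sobolev step $H^2(\bbt^3)\hookrightarrow L^\infty(\bbt^3)$ explicit, whereas the paper silently passes from $\|f\|_{L^2_v}$ to $\sqrt{E(t)}$ pointwise in $x$; your version is more honest but introduces a Sobolev constant in (1)--(4) that the paper's statement suppresses.

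There is, however, one place where your induction for (5) does not close as written. In the identity
\[
\rho\,\partial^\alpha U=\partial^\alpha(\rho U)-\sum_{0<\beta\leq\alpha}\binom{\alpha}{\beta}\partial^\beta\rho\,\partial^{\alpha-\beta}U,
\]
the principal term $\partial^\alpha(\rho U)=\int\partial^\alpha f\,v\sqrt{m}\,dv$ is controlled only by $C\sqrt{E(t)}$ via your moment estimate, not by $E(t)$. The cross terms are indeed $O(E(t))$ (a factor $\partial^\beta\rho=O(\sqrt{E(t)})$ times either $U=O(\sqrt{E(t)})$ or an inductively bounded lower derivative), but they cannot absorb the principal term. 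Hence your argument---and the paper's analogous computation, whose displayed product of derivatives of $\rho$ appears to drop the numerator $\partial^\alpha(\rho U)$ altogether---actually yields $|\partial^\alpha U|,\ |\partial^\alpha T|\leq C_{|\alpha|}\sqrt{E(t)}$, not $C_{|\alpha|}E(t)$. The same remark applies to (6). This weaker bound is all that is ever used downstream (in the estimates on $Q^{\mathcal{M}}_{ij}$ and $Q^{\nu}_i$, where only uniform boundedness of the macroscopic fields and their derivatives is needed), so nothing in the paper is jeopardized; but you should state the conclusion your argument genuinely delivers.
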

\begin{proof}
$(1)$ We have from H\"{o}lder inequality
\begin{eqnarray*}
\rho
=1+\int f\sqrt{m}dv
\leq1+\|f\|_2
\leq1+\sqrt{E(t)}.
\end{eqnarray*}
Similarly, we have
\begin{eqnarray*}
\rho&\geq&1-\int f\sqrt{m}dv\geq1-\|f\|_2\geq 1-\sqrt{E(t)}.
\end{eqnarray*}
$(2)$ Since $\int m vdv=0$, we have by H\"{o}lder inequality
\begin{eqnarray*}
U&=&\frac{\int (m+\sqrt{m}f)vdv}{\rho}=\frac{\int fv\sqrt{m}dv}{\rho}\cr
&\leq&\frac{\frac{3}{2}\|f\|_2}{1-\sqrt{E(t)}}\leq\frac{3}{2}\frac{\sqrt{E(t)}}{1-\sqrt{E(t)}}\cr
&\leq&3\sqrt{E(t)}.
\end{eqnarray*}
$(3)$ The estimate of $T$ can be treated similarly as follows:
\begin{eqnarray*}
T&=&\frac{\int (m+\sqrt{m}f)|v|^2dv-\rho|U|^2}{3\rho}\cr
&\leq&\frac{3+\int f|v|^2\sqrt{m}dv}{3\rho}\cr
&\leq&\frac{3+240\sqrt{2\pi^3}\|f\|_2}{3\rho}\cr
&\leq&\frac{1+80\sqrt{2\pi^3}\sqrt{E(t)}}{(1-\sqrt{E(t)})}\cr
&\leq& \frac{3}{2},
\end{eqnarray*}
where we used the smallness assumption on $E(t)$ and
\[
\int |v|^4e^{-\frac{|v|^2}{2}}dv\leq 240\sqrt{2\pi^3}.
\]
The lower bound can be estimated analogously as follows:
\begin{eqnarray*}
T&=&\frac{\int (m+\sqrt{m}f)|v|^2dv-\rho|U|^2}{3\rho}\cr
&\geq&\frac{3-\int f|v|^2\sqrt{m}dv-\rho|U|^2}{3\rho}\cr
&\geq&\frac{3-15\sqrt{\pi}\sqrt{E(t)}-(1-\sqrt{E(t)})|3\sqrt{E(t)}|^2}{3(1-\sqrt{E(t)})}\cr
&\geq&\frac{1-(5\sqrt{\pi}+3)\sqrt{E(t)}}{1-\sqrt{E(t)}}\cr
&\geq& \frac{1}{2}~.
\end{eqnarray*}
We now turn to the derivatives of the macroscopic fields.\newline
$(4)$ follows directly by the same argument as in (1) noting that
\begin{eqnarray*}
\partial^{\alpha}\rho=\partial^{\alpha}\Big(\int m+f\sqrt{m}dv\Big)=\int\partial^{\alpha}f\sqrt{m}dv.
\end{eqnarray*}
$(5)$ We observe from $U=\frac{\int f\sqrt{m}dv}{\rho}$ that
\begin{eqnarray*}
\displaystyle|\partial^{\alpha}U|\leq C_{|\alpha|}
\displaystyle\Big\{\sum_{1\leq i\leq |\alpha|}|\rho|^{2i}\Big\}
\displaystyle\Big\{\sum_{1\leq \gamma\leq |\alpha|}|\partial\rho|^{2i}\Big\}
\cdots \displaystyle\Big\{\sum_{1\leq \gamma\leq |\alpha|}|\partial^{\alpha}\rho|^{2i}\Big\}\rho^{-2^{|\alpha|}}.
\end{eqnarray*}
We now employ $(1)$ and $(4)$ to see
\begin{eqnarray*}
|\partial^{\alpha}U|\leq \frac{C_{|\alpha|}E(t)}{(1-\sqrt{E(t)})^{2^{|\alpha|}}},
\end{eqnarray*}
where we used for $i\geq 1$
\[
E^{i}(t)\leq E(t).
\]
$(6)$ Similarly, we observe that
\begin{eqnarray*}
\displaystyle|\partial^{\alpha}T|&\leq& C_{|\alpha|}
\displaystyle\Big\{\sum_{1\leq i\leq |\alpha|}|\rho|^{2i}\Big\}
\displaystyle\Big\{\sum_{1\leq \gamma\leq |\alpha|}|\partial\rho|^{2i}\Big\}
\cdots \displaystyle\Big\{\sum_{1\leq \gamma\leq |\alpha|}|\partial^{\alpha}\rho|^{2i}\Big\}\cr
&\times&\displaystyle\Big\{\sum_{1\leq i\leq |\alpha|}|U|^{2i}\Big\}
\displaystyle\Big\{\sum_{1\leq \gamma\leq |\alpha|}|\partial U|^{2i}\Big\}
\cdots \displaystyle\Big\{\sum_{1\leq \gamma\leq |\alpha|}|\partial^{\alpha}U|^{2i}\Big\}\rho^{-2^{|\alpha|}}.
\end{eqnarray*}
This gives by $(1),(2),(4)$ and $(5)$
\begin{eqnarray*}
|\partial^{\alpha}T|\leq \frac{C_{|\alpha|}E(t)}{(1-\sqrt{E(t)})^{2|\alpha|}}.
\end{eqnarray*}
\end{proof}
The following lemma can be proved in an almost identical manner. We omit the proof.
%
%
%
%
\begin{lemma}\label{ULBoundofField2}
Let $|\alpha|\geq 1$. Suppose $E(t)$ is sufficiently small. Then we have
\begin{eqnarray*}
&&(1)~1-\sqrt{E(t)}\leq\rho_{\theta}(x,t)\leq 1+\sqrt{E(t)},\cr
&&(2)~|U_{\theta}(x,t)|\leq 3\sqrt{E(t)},\cr
&&(3)~\frac{1}{2}\leq T_{\theta}(x,t)\leq \frac{3}{2},\cr
&&(4)~|\partial^{\alpha}\rho_{\theta}(x,t)|\leq \sqrt{E(t)},\cr
&&(5)~|\partial^{\alpha}U_{\theta}(x,t)|\leq C_{|\alpha|}E(t),\cr
&&(6)~|\partial^{\alpha}T_{\theta}(x,t)|\leq C_{|\alpha|}E(t),
\end{eqnarray*}
for some positive constant $C_{|\alpha|}$.
\end{lemma}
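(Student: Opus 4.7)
The plan is to reduce every claim to the corresponding statement of Lemma~\ref{ULBoundofField1} by first solving the defining relations \eqref{Macro_Theta} explicitly. A direct computation gives
\begin{align*}
\rho_{\theta} &= 1 + \theta(\rho - 1), \\
U_{\theta} &= \frac{\theta \rho}{\rho_{\theta}}\, U, \\
T_{\theta} &= 1 - \frac{|U_{\theta}|^2}{3} + \frac{\theta \rho}{3\rho_{\theta}}\bigl(|U|^2 + 3T - 3\bigr),
\end{align*}
so $\rho_\theta, U_\theta, T_\theta$ are explicit rational functions of $(\rho, U, T)$ that collapse to $(1, 0, 1)$ at $(\rho, U, T) = (1, 0, 1)$.

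I would then dispatch the zeroth-order bounds in order. Claims (1) and (4) are immediate: $\rho_\theta - 1 = \theta(\rho - 1)$ and $\partial^\alpha \rho_\theta = \theta\,\partial^\alpha \rho$ together with $0 \le \theta \le 1$ reduce them directly to items (1) and (4) of Lemma~\ref{ULBoundofField1}. For (2), I would combine (1) with item (2) of Lemma~\ref{ULBoundofField1} to obtain
\[
|U_{\theta}| \le \frac{\rho}{\rho_{\theta}}\,|U| \le \frac{1 + \sqrt{E(t)}}{1 - \sqrt{E(t)}} \cdot 3\sqrt{E(t)},
\]
which absorbs into $3\sqrt{E(t)}$ (or a slightly larger constant) once $E(t)$ is sufficiently small. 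Claim (3) then follows by inserting (1), (2) and the bounds of Lemma~\ref{ULBoundofField1} into the explicit formula for $T_\theta$, noting that $T_\theta - 1 = O(\sqrt{E(t)})$, and choosing $E(t)$ small.

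For the derivative estimates (5) and (6) I would differentiate $U_\theta$ and $T_\theta$ via the Leibniz and Fa\`a di Bruno formulas exactly as in items (5)--(6) of Lemma~\ref{ULBoundofField1}. Each resulting term is a product of derivatives of $\rho, U, T$, controlled by Lemma~\ref{ULBoundofField1}, divided by a bounded power of $\rho_\theta$, controlled from below by claim (1) of the present lemma uniformly in $\theta \in [0,1]$. Since any factor $\partial^\beta U$ or $\partial^\beta T$ with $|\beta|\ge 1$ contributes $E(t)$ while $\partial^\beta \rho$ contributes $\sqrt{E(t)}$, the product is bounded by $C_{|\alpha|}E(t)$ after collapsing higher powers via $E(t)^i \le E(t)$ for $i \ge 1$.

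The only real obstacle is bookkeeping: $T_\theta$ depends on derivatives of $U_\theta$ through $|U_\theta|^2$, so (5) must be established before (6) can be expanded via the chain rule. Once this ordering is respected, no new analytic ingredient beyond Lemma~\ref{ULBoundofField1} is required, which is presumably why the authors omit the proof.
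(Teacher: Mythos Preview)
Your proposal is correct and matches the paper's approach: the paper simply states that the proof is ``almost identical'' to that of Lemma~\ref{ULBoundofField1} and omits it, and your reduction via the explicit formulas for $\rho_\theta$, $U_\theta$, $T_\theta$ is precisely the way to make that remark rigorous. One minor simplification: since $\rho_\theta U_\theta = \theta\int f v\sqrt{m}\,dv$, you can bound $|U_\theta|$ directly by $\frac{(3/2)\sqrt{E(t)}}{\rho_\theta}$ as in the proof of Lemma~\ref{ULBoundofField1}(2), avoiding the extra factor $\frac{1+\sqrt{E(t)}}{1-\sqrt{E(t)}}$.
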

Having established the preceding estimates for the macroscopic fields, we can now prove the following
crucial proposition for the nonlinear perturbation $\Gamma(f)$.
%
%
%
%
\begin{proposition}\label{bilinear.estimate}
Suppose $E(t)$ is sufficiently small such that estimates in Lemma \ref{ULBoundofField1} and Lemma \ref{ULBoundofField2}
are valid. Then we have
\begin{eqnarray*}
&&(1)~\Big|\int\partial^{\alpha}_{\beta}\Gamma(f,f,f)rdv\Big|\leq C\sum_{|\alpha_1|+|\alpha_2|\leq |\alpha|}
\|\partial^{\alpha_1}f\|_{L^2_{x,v}}\|\partial^{\alpha_2}f\|_{L^2_{v}}\|r\|_{L^2_{v}}\cr
&&\hspace{4cm}+~C\sum_{\substack{|\alpha_1|+|\alpha_2|\leq |\alpha|,\\|\beta_2|\leq |\beta|}}
\|\partial^{\alpha_1}f\|_{L^2_{x,v}}\|\partial^{\alpha_2}_{\beta_2}f\|_{L^2_{v}}\|r\|_{L^2_{v}}\cr
&&\hspace{4cm}+~C\sum_{\substack{|\alpha_1|+|\alpha_2|+|\alpha_3|\\\leq |\alpha|}}C\|\partial^{\alpha_1}f\|_{L^2_{x,v}}\|\partial^{\alpha_2}f\|_{L^2_{v}}\|\partial^{\alpha_3}f\|_{L^2_{v}}\|r\|_{L^2_{v}},\cr
&&(2)~\Big|\langle\Gamma_{1,2,3}(f,g)f\rangle\Big|+\Big|\langle\Gamma_{1,2,3}(g,f)f\rangle\Big|\leq C\sup_{x}\|g\|_{L^2_{x,v}}\|f\|^2_{L^2_{x,v}},\cr
&&\hspace{0.63cm}\Big|\langle\Gamma_{4}(f,g,h)f\rangle\Big|+\Big|\langle\Gamma_4(g,f,h)f\rangle\Big|+\Big|\langle\Gamma_4(g,h,f)f\rangle\Big|
\leq C\sup_{x}\|g\|_{L^2_{v}}\sup_{x}\|h\|_{L^2_{v}}\|f\|^2_{L^2_{x,v}},\cr
&&(3)~\Big\|\Gamma_{1,2,3}(f,g)r+\Gamma_{1,2,3}(g,f)r\Big\|_{L^2_{x,v}}\leq C\sup_{x,v}| r|\sup_x\|f\|_{L^2_{v}}\|g\|_{L^2_{x,v}},\cr
&&\hspace{0.63cm}\Big\|\Gamma_{4}(f,g,h)r+\Gamma_{4}(g,f,h)r+\Gamma_{4}(g,h,f)r\Big\|_{L^2_{x,v}}\leq C\sup_{x,v}| r|\sup_x\|f\|_{L^2_{v}}\sup_x\|g\|_{L^2_{v}}\|h\|_{L^2_{x,v}}.
\end{eqnarray*}
\end{proposition}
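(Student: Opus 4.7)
The plan is to expand each $\partial^\alpha_\beta\Gamma$ via Leibniz, pushing $x$-derivatives onto inner products and macroscopic-field coefficients and $v$-derivatives onto the $\mathcal{Q}$ factors, and then to bound the resulting coefficients uniformly using Lemmas \ref{ULBoundofField1} and \ref{ULBoundofField2}. The starting observation is that each $\Gamma_k$ is built from three kinds of objects: inner products $\langle f,e_j\rangle$ depending only on $(x,t)$; factors $\mathcal{Q}^{\mathcal{M}}_{i,j}$ or $\mathcal{Q}^{\nu}_i$ whose $(x,t)$-dependence enters only through $(\rho_\theta,U_\theta,T_\theta)$ and whose $v$-dependence is polynomial times the Gaussian $e^{-|v-U_\theta|^2/(2T_\theta)+|v|^2/4}$; and, only in $\Gamma_2$, an additional function slot carrying its own $v$-dependence, which is the reason a $\beta_2$ appears in the second sum of (1) but not the first or third.

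For part (1), after Leibniz the $v$-derivatives act either on the $\mathcal{Q}$-factors or, in the $\Gamma_2$ terms, on the trailing $f$-slot. When they land on $\mathcal{Q}$ they raise the degree of $P^{\mathcal{M}}$ (resp.\ $P^\nu$) but preserve the structural hypotheses $(\mathcal{H}_{\mathcal{M}})$ and $(\mathcal{H}_\nu)$ and the Gaussian decay. The $x$-derivatives split across a partition $\alpha=\alpha_1+\alpha_2+\cdots$; those falling on $\langle f,e_j\rangle$ produce $\langle\partial^{\alpha_k}f,e_j\rangle$, which is bounded pointwise in $x$ by $\|\partial^{\alpha_k}f\|_{L^2_v}$, and those falling on macroscopic fields are bounded by Lemmas \ref{ULBoundofField1}--\ref{ULBoundofField2} and absorbed into a universal constant under the smallness of $E(t)$. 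The key analytic point is that since $T_\theta\in[\tfrac12,\tfrac32]$ and $|U_\theta|\leq 3\sqrt{E(t)}$, the combined exponent $-|v-U_\theta|^2/(2T_\theta)+|v|^2/4$ is bounded above by $-c|v|^2$ for some $c>0$, so $\sup_v$ of any polynomial times this Gaussian is finite and independent of $x,t$. Pairing the $v$-integrand against $r$ by Cauchy--Schwarz in $v$ and collecting the three types of terms (bilinear from $\Gamma_{1,3}$, bilinear with a $\beta_2$ from $\Gamma_2$, trilinear from $\Gamma_4$) yields exactly the three sums in the statement.

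For parts (2) and (3) no derivatives need to be distributed, so the task reduces to estimating products of the form $\mathcal{Q}\,\langle\cdot,e_i\rangle\langle\cdot,e_j\rangle$ and their trilinear analogue. In (2) one first uses the uniform bound on $\mathcal{Q}$ to pull out a constant, then Cauchy--Schwarz in $v$ against $f$ yields, pointwise in $x$, a factor $\|f\|_{L^2_v}$ times products of the inner-product slots; after integrating in $x$, placing $\sup_x$ on the slot without a companion $L^2_x$-partner produces the stated $\sup_x\|g\|_{L^2_{x,v}}\|f\|_{L^2_{x,v}}^2$ bound (and the analogous trilinear bound for $\Gamma_4$). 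Part (3) is the same computation done before taking $L^2_x$ norms: one factor is absorbed by $\sup_{x,v}|r|$, all but one of the remaining slots is controlled by $\sup_x\|\cdot\|_{L^2_v}$, and the last is left in $L^2_{x,v}$.

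The main obstacle is combinatorial rather than analytic: after each Leibniz expansion one must verify that the denominators $R^{\mathcal{M}}$ and $R^\nu$ remain bounded away from zero, which follows from $\rho_\theta\geq 1-\sqrt{E(t)}$, $T_\theta\geq \tfrac12$, and the resulting lower bound on $\mathcal{G}_\theta=3+3\rho_\theta-|U_\theta|^2-3T_\theta\rho_\theta$ under the smallness of $E(t)$, and that polynomial growth in $v$ produced by differentiating the Gaussian is always dominated by the exponential decay so that the generic structural assumptions $(\mathcal{H}_{\mathcal{M}})$ and $(\mathcal{H}_\nu)$ are preserved line by line. Once this invariance is recorded, the three estimates reduce to standard applications of Cauchy--Schwarz in $v$ and Hölder in $x$.
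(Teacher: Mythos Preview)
Your proposal is correct and follows essentially the same approach as the paper: Leibniz expansion, uniform control of the $\mathcal{Q}$-coefficients via Lemmas \ref{ULBoundofField1}--\ref{ULBoundofField2} (including the lower bound on $\mathcal{G}_\theta$ to handle the denominators), and Cauchy--Schwarz in $v$ followed by H\"older in $x$. The only cosmetic differences are that the paper records the Gaussian bound by absorbing polynomial factors into a slightly relaxed exponent $e^{-|v-U_\theta|^2/((2+\varepsilon)T_\theta)+|v|^2/4}$ and then completing the square to show this lies in $L^2_v$, whereas you argue directly that the combined exponent is $\leq -c|v|^2+C$; and for part (3) the paper pairs against a test function $\phi\in L^2$ and concludes by duality, while you compute the $L^2_{x,v}$ norm directly---both are equivalent.
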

\begin{remark}
Note that, unlike the case of the Boltzmann equation, we need to impose the smallness condition on the high order energy
to prove the estimates.
\end{remark}
\begin{proof}
(1) To prove (1), we should consider $\Gamma_i(f)$  $(1\leq i\leq 4)$ separately. But for simplicity we only present the
proof for $\Gamma_3(f)$.
Other estimates can be obtained in an almost identical manner. \newline
The estimate of $\Gamma_3(f)$: We first prove the following claim:\newline
Claim: ~There exists a positive constant $\varepsilon=\varepsilon(\alpha,\beta)$ such that
\begin{eqnarray*}
\Big|\partial^{\alpha}_{\beta}\Big\{Q^{\mathcal{M}}_{ij}
e^{-\frac{|v-U_{\theta}|^2}{2T_{\theta}}+\frac{|v|^2}{4}}\Big\}\Big|
\leq Ce^{-\frac{|v-U_{\theta}|^2}{(2+\varepsilon)T_{\theta}}+\frac{|v|^2}{4}}.
\end{eqnarray*}
(Proof of the claim):\newline
We apply the differential operator $\partial$ to $Q^{\mathcal{M}}_{ij}e^{-\frac{|v-U_{\theta}|^2}{2T_{\theta}}+\frac{|v|^2}{4}}$
to see
\begin{eqnarray*}
\partial\Big\{Q^{\mathcal{M}}_{ij}e^{-\frac{|v-U_{\theta}|^2}{2T_{\theta}}+\frac{|v|^2}{4}}\Big\}
&=&\partial\Big\{ Q^{\mathcal{M}}_{ij}\Big\}e^{-\frac{|v-U_{\theta}|^2}{2T_{\theta}}+\frac{|v|^2}{4}}
+Q^{\mathcal{M}}_{ij}\partial\Big\{e^{-\frac{|v-U_{\theta}|^2}{2T_{\theta}}+\frac{|v|^2}{4}}\Big\}\cr
&=&Q^{\mathcal{M}}_{ij}e^{-\frac{|v-U_{\theta}|^2}{2T_{\theta}}+\frac{|v|^2}{4}}\cr
&+& Q^{\mathcal{M}}_{i,j}\Big\{\partial T\frac{|v-U_{\theta}|^2}{2T^2_{\theta}}+\partial U\cdot\frac{v-U}{2T}-\frac{v}{2}\Big\}
e^{-\frac{|v-U_{\theta}|^2}{2T_{\theta}}+\frac{|v|^2}{4}}\cr
&=&\frac{P^{\mathcal{M}}_{ij}(\rho_{\theta},U_{\theta},T_{\theta},v-U_{\theta},v)}{R^{\mathcal{M}}_{ij}(\rho_{\theta},U_{\theta},T_{\theta},\mathcal{G}_{\theta})}
e^{-\frac{|v-U_{\theta}|^2}{2T_{\theta}}+\frac{|v|^2}{4}}\cr
&\leq&\frac{P^{\mathcal{M}}_{ij}(\rho_{\theta},U_{\theta},T_{\theta},1,1)}{R^{\mathcal{M}}_{ij}(\rho_{\theta},U_{\theta},T_{\theta},\mathcal{G}_{\theta})}
e^{-\frac{|v-U_{\theta}|^2}{(2+\varepsilon)T_{\theta}}+\frac{|v|^2}{4}}\cr
&\leq&Ce^{-\frac{|v-U_{\theta}|^2}{(2+\varepsilon)T_{\theta}}+\frac{|v|^2}{4}},
\end{eqnarray*}
where we used the upper and lower bounds of Lemma \ref{ULBoundofField2} with
\begin{eqnarray*}
\mathcal{G}_{\theta}&=&3+3\rho_{\theta}-|U_{\theta}|^2-3T_{\theta}\cr
&\geq&3+3(1-\sqrt{E(t)})-(3\sqrt{E(t)})^2-\frac{9}{2}\cr
&=&\frac{3}{2}-3\sqrt{E(t)}-9E(t)\cr
&\geq&1
\end{eqnarray*}
and
\begin{eqnarray*}
&&|v-U_{\theta}|^re^{-\frac{|v-U_{\theta}|^2}{(2+\varepsilon)T_{\theta}}}
<C\{(2+\varepsilon)T_{\theta}\}^{\frac{\gamma}{2}}<\infty,\cr
&&|v|^re^{-\frac{|v-U_{\theta}|^2}{(2+\varepsilon)T_{\theta}}}\leq C_r(|v-U_{\theta}|^r+|U_{\theta}|^r)
e^{-\frac{|v-U_{\theta}|^2}{(2+\varepsilon)T_{\theta}}}<\infty.
\end{eqnarray*}
Then the induction argument gives the desired result.
We now employ the claim and use H\"{o}lder inequality to see
\begin{align}
\begin{aligned}\label{TRI1}
&\int\big|\partial^{\alpha}_{\beta}\Gamma_3(f,f)r\big|dv\cr
&\hspace{1cm}\leq\sum_{\substack{|\alpha_0|+|\alpha_1|+|\alpha_2|\\=|\alpha|}}
\int \Big|\partial^{\alpha_0}_{\beta}\Big\{Q^{\mathcal{M}}_{i,j}e^{-\frac{|v-U_{\theta}|^2}{2T_{\theta}}+\frac{|v|^2}{4}}\Big\}\Big|
\langle\partial^{\alpha_1}f,e_i\rangle\langle\partial^{\alpha_2}f,e_j\rangle rdv\cr
&\hspace{1cm}\leq C\sum_{\substack{|\alpha_0|+|\alpha_1|+|\alpha_2|\\=|\alpha|}}
\int e^{-\frac{|v-U_{\theta}|^2}{(2+\varepsilon)T_{\theta}}+\frac{|v|^2}{4}}
\langle\partial^{\alpha_1}f,e_i\rangle\langle\partial^{\alpha_2}f,e_j\rangle rdv\cr
&\hspace{1cm}=C\sum_{\substack{|\alpha_0|+|\alpha_1|+|\alpha_2|\\=|\alpha|}}
\|\partial^{\alpha_1}f\|_{L^2_{v}}\|\partial^{\alpha_2}f\|_{L^2_{v}}
\int e^{-\frac{|v-U_{\theta}|^2}{(2+\varepsilon)T_{\theta}}+\frac{|v|^2}{4}}rdv.
\end{aligned}
\end{align}
We apply H\"{o}lder inequality again to see
\begin{eqnarray}\label{TRI2}
\int e^{-\frac{|v-U_{\theta}|}{(2+\varepsilon)T_{\theta}}+\frac{|v|^2}{4}}hdv
&\leq&C\Big\|e^{-\frac{|v-U_{\theta}|}{(2+\varepsilon)T_{\theta}}+\frac{|v|^2}{4}}\Big\|_{L^2_{v}}\|r\|_{L^2_{v}}\leq C\|r\|_{L^2_{v}},
\end{eqnarray}
where we used
\begin{eqnarray*}
\Big\|e^{-\frac{|v-U_{\theta}|}{2T_{\theta}}+\frac{|v|^2}{4}}\Big\|^2_2
&=&\int \exp\Big(-\Big[\frac{2}{(2+\varepsilon)T_{\theta}}-\frac{1}{2}\Big]
\Big|v-\frac{4}{4-(2+\varepsilon)T_{\theta}}U_{\theta}\Big|^2
-\frac{(2+\varepsilon)T_{\theta}}{2-(2+\varepsilon)T_{\theta}}\Big)dv\cr
&\leq&C\int \exp\Big(-\Big[\frac{1}{(2+\varepsilon)T_{\theta}}-\frac{1}{2}\Big]\Big|v+\frac{2}{4-(2+\varepsilon)T_{\theta}}
U_{\theta}\Big|^2\Big)dv\cr
&=&C\sqrt{\Big(\frac{4-(2+\varepsilon)T_{\theta}}{{2}}\Big)^3}<\infty.
\end{eqnarray*}
In the last line, we used Lemma \ref{ULBoundofField2}.
We now substitute (\ref{TRI2}) into (\ref{TRI1}) to obtain
the desired result.\newline
(2) In (1), we set $\alpha$=$\beta$=0.
Then we have from (\ref{TRI1})
\begin{eqnarray*}
\int\Gamma_3(f,g)f dx dv &\leq& C\int\|f\|_{L^2_{v}}\|g\|_{L^2_{v}}
\Big(\int e^{-\frac{|v-U_{\theta}|}{(2+\varepsilon)T_{\theta}}+\frac{|v|^2}{4}}fdv\Big)dx\cr
&\leq&C\int\|f\|_{L^2_{v}}\|g\|_{L^2_{v}}\|f\|_{L^2_v}dx \cr
&\leq& C\sup_{x}\|g\|_{L^2_v}\|f\|^2_{L^2_{x,v}}.
\end{eqnarray*}
We now take $L^2$ norms with respect to spatial variables to obtain the desired result.\newline
(3) Let $\phi\in L^2$. Then we have from the same argument used in (1)
\begin{eqnarray*}
\langle \Gamma_3(f,g)r,\phi \rangle &\leq& C\int\|f\|_{L^2_{v}}\|g\|_{L^2_{v}}\|r\phi\|_{L^2_{v}}dx\cr
&\leq&C\sup_{x,v}|r|\int\|f\|_{L^2_{v}}\|g\|_{L^2_{v}}\|\phi\|_{L^2_{v}}dx\cr
&\leq&C\sup_{x,v}|r|\Big(\sqrt{\int\|f\|^2_{L^2_{v}}\|g\|^2_{L^2_{v}}dx}\Big)\|\phi\|_{L^2_{x,v}}.
\end{eqnarray*}
Therefore, the duality argument gives
\begin{eqnarray*}
\big\|\Gamma(f,g)r\big\|&\leq& C\sup_{x,v}|r|\sqrt{\int\|f\|^2_{L^2_{v}}\|g\|^2_{L^2_{v}}dx}\cr
&\leq&C\sup_{x,v}|r|\sup_x\|f\|_{L^2_{v}}\|g\|_{L^2_{x,v}}.
\end{eqnarray*}
\end{proof}
%
%
%
%
%
\section{Local existence}
In this section, we establish the local in time existence of classical solutions under the assumption that
the high order energy $E(t)$ is sufficiently small. This local solution will be extended to the global solution
in the last section by combining the coercivity estimate of $L$ and a refined energy estimate.
\begin{theorem}\label{local.existence}
Let  $F_0=m+\sqrt{m}f_0\geq 0$. Suppose $f_0$ satisfies the conservation laws (\ref{conservation.law.f}).
Then there exist $M_0>0$ , $T_{*}>0$, such that if ~$T^*\leq \frac{M_0}{2}$ and
$E(0)\leq \frac{M_0}{2}$, there is a unique solution $f(x,v,t)$ to the
Boltzmann-BGK (\ref{Linearized_Main}) such that\newline
\indent$(1)$ The high order energy $E(t)$ is continuous in $[0,T^*)$ and uniformly bounded:
\[
\sup_{0\leq t\leq T^*}E(t)\leq M_0.
\]
\indent$(2)$ The distribution function remains positive in $[0, T_{*})$:
\[
F(x,v,t)=m+\sqrt{m}f(x,v,t)\geq 0.
\]
\indent$(3)$ The conservation laws (\ref{conservation.law.f}) hold for all $[0, T_*]$.
\end{theorem}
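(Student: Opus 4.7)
The plan is to construct $f$ as the limit of a Picard iteration in the high-order energy space, using explicit Duhamel representation along characteristics together with the coercivity of Lemma \ref{coercivity1} and the nonlinear bounds of Proposition \ref{bilinear.estimate} to close the energy estimate. Setting $f^{0}\equiv f_0$, I would define $f^{n+1}$ as the unique classical solution of the linear problem
\[
(\partial_t + v\cdot\nabla_x + \nu_c) f^{n+1} = \nu_c Pf^n + \Gamma(f^n),\qquad f^{n+1}(0)=f_0,
\]
given explicitly by the mild formula
\[
f^{n+1}(x,v,t) = e^{-\nu_c t}f_0(x-vt,v) + \int_0^t e^{-\nu_c(t-s)}\bigl(\nu_c Pf^n + \Gamma(f^n)\bigr)(x-v(t-s),v,s)\,ds,
\]
so that at every stage $f^{n+1}$ is as smooth as $f_0$ and $f^n$ permit.

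The first main step is to prove the uniform bound $E(f^{n+1})(t)\leq M_0$ on $[0,T^*]$ by induction. For each $|\alpha|+|\beta|\leq N$, I would apply $\partial^\alpha_\beta$ to the iteration equation and pair with $\partial^\alpha_\beta f^{n+1}$ in $L^2_{x,v}$; the transport term drops out by periodicity, the commutator $[\partial^\alpha_\beta, v\cdot\nabla_x]f^{n+1}$ only contributes terms already contained in $|||f^{n+1}|||^2$, and the damping term provides the coercive $\nu_c\|\partial^\alpha_\beta f^{n+1}\|_{L^2_{x,v}}^2$. Proposition \ref{bilinear.estimate}(1) bounds the nonlinear source by
\[
\bigl|\langle \partial^\alpha_\beta \Gamma(f^n), \partial^\alpha_\beta f^{n+1}\rangle\bigr| \leq C|||f^n|||^{2}\,|||f^{n+1}|||,
\]
which, combined with the induction hypothesis $E(f^n)\leq M_0$ and Cauchy-Schwarz, yields
\[
\frac{d}{dt}|||f^{n+1}|||^2 + 2\nu_c|||f^{n+1}|||^2 \leq C(1+\sqrt{M_0})|||f^{n+1}|||^2 + C(M_0)|||f^n|||^2.
\]
Integrating on $[0,t]$ and using $t\leq T^*\leq M_0/2$ and $E(f^n)\leq M_0$, Gronwall closes the induction provided $M_0$ is chosen small relative to $\nu_c$ and the constants from Proposition \ref{bilinear.estimate}.

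Next, to prove the iteration is Cauchy I would set $g^n=f^{n+1}-f^n$, which satisfies the same linear equation with zero initial data and right-hand side $\nu_c Pg^{n-1}+\bigl[\Gamma(f^n)-\Gamma(f^{n-1})\bigr]$. Writing each $\Gamma_i$-difference as a telescoping sum using the multilinearity of $\Gamma=\Gamma_1-\Gamma_2+\Gamma_3+\Gamma_4$ and applying Proposition \ref{bilinear.estimate} to each summand, these differences are controlled by $C\sqrt{M_0}\bigl(|||g^{n-1}|||+|||g^n|||\bigr)$. The same energy identity as in step one then gives, after shrinking $T^*$ if necessary,
\[
\sup_{[0,T^*]}|||g^n|||^2 \leq \tfrac{1}{2}\sup_{[0,T^*]}|||g^{n-1}|||^2,
\]
so $\{f^n\}$ converges in the high-order energy norm to a limit $f$ solving the linearized BGK equation with $E(t)\leq M_0$, and uniqueness follows from the same contraction applied to the difference of two solutions.

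Finally, for positivity one writes $F=m+\sqrt{m}f$ and notes that $F$ solves $(\partial_t+v\cdot\nabla_x+\nu)F=\nu\mathcal{M}(F)$, with $\nu>0$ (by the lower bound $\rho\geq 1-\sqrt{E(t)}>0$, $T\geq 1/2$ of Lemma \ref{ULBoundofField1}) and $\mathcal{M}(F)\geq 0$ by inspection, so Duhamel along characteristics preserves $F\geq 0$ from $F_0\geq 0$. The conservation laws (\ref{conservation.law.f}) are inherited at $t=0$ from the hypothesis on $f_0$ and propagated in time by testing the equation against $\sqrt m$, $v\sqrt m$, $|v|^2\sqrt m$ and using the cancellation (\ref{ConservationLaw}) applied to $F$, while continuity of $E(t)$ follows from the integrability of $|||f|||^2$ in time and the mild formulation. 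The hardest step will be step one: because the dissipation $\nu_c\int_0^t|||f|||^2\,ds$ is itself part of $E(t)$, one cannot close the bound by a naive Gronwall argument, and the smallness of $M_0$ must be used quantitatively to absorb both the commutator contributions and the nonlinear estimates of Proposition \ref{bilinear.estimate} into the coercive dissipation—this is precisely what forces the balanced constraint $T^*\leq M_0/2$.
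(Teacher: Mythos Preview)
Your proposal is correct and follows essentially the same route as the paper: the identical iteration scheme $(\partial_t+v\cdot\nabla_x+\nu_c)f^{n+1}=\nu_cPf^n+\Gamma(f^n)$, the same $\partial^\alpha_\beta$ energy estimate using Proposition~\ref{bilinear.estimate}(1) to close the induction on $E(f^n)\leq M_0$ for small $M_0,T^*$, and the same uniqueness and conservation arguments. The only cosmetic differences are that you spell out the Cauchy step explicitly (the paper simply lets $n\to\infty$) and you verify positivity by Duhamel on the limit $F$ rather than inductively on the iterates $F^{n+1}$; both are equivalent once Lemma~\ref{ULBoundofField1} guarantees $\nu>0$ and $\mathcal{M}(F)\geq 0$.
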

\begin{proof}
We consider the following iteration sequence:
\begin{align}
\begin{aligned}\label{iteration.F}
\displaystyle\partial_t F^{n+1}  + v \cdot \nabla_x F^{n+1}&=\nu^n(\mathcal{M}(F^n)-F^{n+1}),\cr
\displaystyle F^{n+1}(x,v,0) &= F_0(x,v),
\end{aligned}
\end{align}
which is equivalent to
\begin{align}
\begin{aligned}\label{Iteration_f}
\displaystyle\big\{\partial_t +v\cdot\nabla_x+\nu_c\big\}f^{n+1}&=\nu_cP f^n+\Gamma(f^{n}),\cr
\displaystyle f^{n+1}(x,v,0) &= f_0(x,v). 
\end{aligned}
\end{align}
Now the theorem follows easily once we establish the following lemma.
%
%
%
%
%
\begin{lemma}
There exist $M_0>0$ and $T_*>0$ such that if $E(f_0)<\frac{M_0}{2}$ then
$E(f^n(t))<M_0$ implies $E(f^{n+1}(t))<M_0$ for $t\in [0,T_*]$.
\end{lemma}
\begin{proof}
We take $\partial^{\alpha}_{\beta}$ derivatives of (\ref{Iteration_f}) to obtain
\begin{align}
\begin{aligned}\label{Iteration_Df}
\underbrace{\big\{\partial_t +v\cdot\nabla_x+\nu_c\big\}
\partial^{\alpha}_{\beta}f^{n+1}}_{L}
=&-\underbrace{\sum_{\beta\neq 0}\{\partial_{\beta}v\cdot\nabla_x\}\partial^{\alpha}f^{n+1}}_{R_1}
&+\underbrace{\nu_c\partial_{\beta}P\partial^{\alpha}f^{n}}_{R_2}
+\underbrace{\partial^{\alpha}_{\beta}\Gamma(f^n)}_{R_3}.
\end{aligned}
\end{align}
We take the inner product of (\ref{Iteration_Df}) with $\partial^{\alpha}_{\beta}f^{n+1}$ and estimate each term
separately.\newline
(1) $L$: l.h.s can be calculated directly as follows:
\[
\langle L,\partial^{\alpha}_{\beta}f^{n+1}\rangle=\frac{1}{2}\frac{d}{dt}\|\partial^{\alpha}_{\beta}f^{n+1}(t)\|^2_{L^2_{x,v}}
+\nu_c\|\partial^{\alpha}_{\beta}f(t)\|^2_{L^2_{x,v}}.
\]
(2) $R_1$:
We see from the following observation
\begin{eqnarray*}
\partial_{v_i}v\cdot\nabla_x\partial^{\alpha}f^{n+1}=\partial_{x_i}\partial^{\alpha}f^{n+1}
\end{eqnarray*}
that
\begin{eqnarray*}
\langle R_1,\partial^{\alpha}_{\beta}f\rangle&\leq&
\sum_{\beta\neq 0} \|\{\partial_{\beta}v\cdot\nabla_x\}\partial^{\alpha}f^{n+1}\|_{L^2_{x,v}}
\|\partial^{\alpha}_{\beta}f^{n+1}\|_{L^2_{x,v}}\cr
&\leq&C|||f^{n+1}(t)|||^2\cr
&\leq&CE_{n+1}(t).
\end{eqnarray*}
(3) $R_2$: We first note that
\[
\|\partial_{\beta}P\partial^{\alpha}f\|_{L^2_{x,v}}\leq C_{\beta}\|\partial^{\alpha}f\|_{L^2_{x,v}}.
\]
Therefore, we have from H\"{o}lder inequality and Young's inequality
\begin{eqnarray*}
\langle R_2,\partial^{\alpha}_{\beta}f\rangle\leq &\leq&
C\|\partial^{\alpha}f^{n}\|^2_{L^2_{x,v}}+\|\partial^{\alpha}_{\beta}f^{n+1}\|^2_{L^2_{x,v}}\cr
&\leq& C(|||f^{n}|||^2+|||f^{n+1}|||^2)\cr
&\leq&C(E_{n}(t)+E_{n+1}(t)).
\end{eqnarray*}
We now turn to the estimate of the nonlinear term.\newline
(4) $R_3$: We have from Lemma \ref{bilinear.estimate}
\begin{eqnarray*}
\langle R_4,\partial^{\alpha}_{\beta}f\rangle
&\leq& C\sum_{|\alpha_1|+|\alpha_2|\leq |\alpha|}
\int_{\bbr^3}\|\partial^{\alpha_1}f^n\|_{L^2_{x,v}}\|\partial^{\alpha_2}f^n\|_{L^2_{v}}\|\partial^{\alpha}_{\beta}f^{n+1}\|_{L^2_{v}}dx\cr
&+&C\sum_{\substack{|\alpha_1|+|\alpha_2|\leq |\alpha|,\\|\beta_2|\leq |\beta|}}
\int_{\bbr^3}\|\partial^{\alpha_1}f\|_{L^2_{x,v}}\|\partial^{\alpha_2}_{\beta_2}f^n\|_{L^2_{v}}\|\partial^{\alpha}_{\beta}f^{n+1}\|_{L^2_{v}}dx\cr
&+&C\sum_{\substack{|\alpha_1|+|\alpha_2|+|\alpha_3|\\\leq |\alpha|}}\int_{\bbr^3}\|\partial^{\alpha_1}f\|_{L^2_{x,v}}\|\partial^{\alpha_2}f^n\|_{L^2_{v}}\|\partial^{\alpha_3}f^n\|_{L^2_{v}}\|\partial^{\alpha}_{\beta}f^{n+1}\|_{L^2_{v}}dx\cr
&\leq& C\sum_{|\alpha_1|+|\alpha_2|\leq|\alpha|}
\big(\sup_{x}\|\partial^{\alpha_1}f^{n}\|_{L^2_{v}}+\sup_{x}\|\partial^{\alpha_1}f^{n}\|^2_{L^2_{v}}\big)\int_{\bbr^3}
\|\partial^{\alpha_2}f^{n}\|_{L^2_{v}}\|\partial^{\alpha}_{\beta}f^{n+1}\|_{L^2_{v}}dx\cr
&\leq& C\sum_{|\alpha_1|+|\alpha_2|\leq|\alpha|}\big(\sup_{x}\|\partial^{\alpha_1}f^{n}\|_{L^2_{v}}+
\sup_{x}\|\partial^{\alpha_1}f^{n}\|^2_{L^2_{v}}\big)
\Big(\|\partial^{\alpha_2}f^{n}\|^2_{L^2_{x,v}}
+\|\partial^{\alpha}_{\beta}f^{n+1}\|^2_{L^2_{x,v}}\Big)\cr
&\leq&C\big(E^{\frac{3}{2}}_n(t)+E^2_n(t)+\sqrt{E_n(t)}E_{n+1}(t)+E_n(t)E_{n+1}(t)\big),
\end{eqnarray*}
where we assumed $\alpha_1$ to be the smallest index without loss of generality and used the following Sobolev embedding
\begin{eqnarray}\label{Sobolev}
H^2(\bbt^3)\subseteq L^{\infty}(\bbt^3).
\end{eqnarray}
We substitute all these ingredients into (\ref{Iteration_Df}) to obtain
\begin{eqnarray*}
&&\frac{1}{2}\frac{d}{dt}\|\partial^{\alpha}_{\beta}f^{n+1}(t)\|^2_{L^2_{x,v}}
+\nu_c\|\partial^{\alpha}_{\beta}f^{n+1}(t)\|^2_{L^2_{x,v}}\cr
&&\hspace{1cm}\leq
C\Big\{E_{n+1}(t)+ \big(E_n(t)\big)^{\frac{3}{2}}+\big(E_n(t)\big)^{2}+\big(E_n(t)\big)^{\frac{1}{2}}E_{n+1}(t)+E_n(t)E_{n+1}(t)
\Big\}.
\end{eqnarray*}
We then sum over $\alpha$ and $\beta$ and integrate in time to see
\begin{align}
\begin{aligned}\nonumber
&E_{n+1}(t)\leq E_{n+1}(0)\cr
&\hspace{0.5cm}+C\int^t_0\Big\{E_{n+1}(t)+\big(E_n(t)\big)^{\frac{3}{2}}+\big(E_n(t)\big)^{2}
+\big(E_n(t)\big)^{\frac{1}{2}}E_{n+1}(t)+E_n(t)E_{n+1}(t)\Big\}ds\cr
&\hspace{0.5cm}\leq\frac{M_0}{2}+C\Big\{T_*\sup_{0\leq t\leq T_*}E_{n+1}(t)
+T_*\sup_{0\leq t\leq T_*}E_{n}
+T_*\big(\sup_{0\leq t\leq T_*}E_{n}\big)^{\frac{3}{2}}+T_*\big(\sup_{0\leq t\leq T_*}E_{n}\big)^2\cr
&\hspace{0.5cm}+T_*\Big(\sup_{0\leq t\leq T_*}E_n\Big)^{\frac{1}{2}}\sup_{0\leq T_*}E^{n+1}(t)
+T_*\Big(\sup_{0\leq t\leq T_*}E_n\Big)\sup_{0\leq T_*}E^{n+1}(t)
\Big\},
\end{aligned}
\end{align}
which yields
\begin{eqnarray*}
\Big(1-CT_*-CT_*\sqrt{M_0}-CT_*M_0~\Big)\sup_{0\leq t\leq T_*}E^{n+1}(t)\leq
\Big(\frac{1}{2}+CT_*+CT_*\sqrt{M_0}+CT_*M_0~\Big)M_0.
\end{eqnarray*}
This gives the desired result for sufficiently small $M_0$ and $T_*$.\newline
\end{proof}
We now go back to the proof of the theorem and let $n\rightarrow \infty$ to establish the local in time existence
of a smooth solution.
To prove uniqueness, we assume that $g$ is another local solution corresponding to the same initial data $f_0$.
We then have
\begin{align}
\begin{aligned}\label{Uniqueness1}
\big\{\partial_t+v\cdot \nabla +\nu_c\big\}(f-g)&=P(f-g)+\Gamma_{1,2,3}(f-g,f)+\Gamma_{1,2,3}(g,f-g)\cr
&+\Gamma_4(f-g,f,f)+\Gamma_4(g,f-g,f)+\Gamma_4(g,g,f-g).
\end{aligned}
\end{align}
We recall from Proposition \ref{bilinear.estimate} and (\ref{Sobolev})
\begin{eqnarray*}
&&\langle\Gamma_{1,2,3}(f-g,f)+\Gamma_{1,2,3}(g,f-g),f-g\rangle\cr
&&\hspace{1cm}+\langle\Gamma_4(f-g,f,f)+\Gamma_4(g,f-g,f)+\Gamma_4(g,g,f-g),f-g\rangle\cr
&&\hspace{1cm}\leq\sum_{|\alpha|\leq 2}\big(\|\partial^{\alpha}f\|^2_{L^2_{x,v}}
+\|\partial^{\alpha}g\|^2_{L^2_{x,v}}+\|\partial^{\alpha}f\|_{L^2_{x,v}}
+\|\partial^{\alpha}g\|_{L^2_{x,v}}\big)\|f-g\|^2_{L^2_{x,v}}.
\end{eqnarray*}
We now multiply $f-g$ to both sides of (\ref{Uniqueness1}), integrate with respect to $x$, $v$, $t$ and use the above estimate to see
\begin{eqnarray*}
&&\|f(t)-g(t)\|^2_{L^2_{x,v}}+\int^t_0\|f(s)-g(s)\|^2_{L^2_{x,v}}ds\cr
&&\hspace{0.5cm}\leq
C\sum_{|\alpha|\leq 2}\sup_{0\leq t\leq T_*}\big(\sqrt{E_f(t)}+\sqrt{E_g(t)}+E_f(t)+E_g(t)+1\big)
\int^t_0\|f(s)-g(s)\|^2_{L^2_{x,v}}ds.
\end{eqnarray*}
Therefore, for sufficiently small $E_f(0)$ and $E_g(0)$, the uniqueness follows from Grownwall's theorem.
We now turn to the continuity of $E(t)$. Let $f$ be the smooth local solution constructed above:
\begin{eqnarray*}
\partial_t f+v\cdot \nabla f+\nu_cf=Pf+\Gamma(f).
\end{eqnarray*}
We multiply $f$ and integrate over $x$, $v$ and then over $[s,t]$ to see.
\begin{eqnarray*}
|E(t)-E(s)|
\leq C\Big[1+\sqrt{E(t)}+E(t)\Big]\int^t_s\sum_{|\alpha|\leq 2}\|\partial^{\alpha}f\|_{L^2_{x,v}}d\tau\rightarrow 0.
\end{eqnarray*}
The positivity of $m+\sqrt{m}f$ can be verified iteratively from the positivity of $F_0$ using (\ref{iteration.F}).
Finally, since the local solution is smooth, the conservation laws can be obtained straightforwardly.
\end{proof}
\section{Coercivity of L}
The coercivity estimate in Lemma \ref{coercivity1}, involving only microscopic components,
is not strong enough to play as a good term in the energy method. In this section, we show that the full coercivity
$L$ can be recovered as long as the energy $E(t)$ remains sufficiently small.
We first set for simplicity
\[a(x,t)=\int f\sqrt{m}dv,~ b(x,t)=\int fv\sqrt{m}dv \mbox{ and }c(x,t)=\int f|v|^2\sqrt{m}dv.\]
Recall that $f$ can be divided into its hydrodynamic part $\tilde{P}f$ and microscopic part $(I-\tilde P)f$:
\begin{eqnarray}\label{divide}
f=\tilde{P}f+(I-\tilde{P})f,
\end{eqnarray}
where
\[
\tilde{P}f=a\sqrt{m}+b\cdot v\sqrt{m}+c|v|^2\sqrt{m}.
\]
We observe that there exists constants $C$ such that
\begin{eqnarray}\label{eqivalence}
\frac{1}{C}\|(I-P)f\|_{L^2_{x,v}}\leq\|(I-\tilde{P})f\|_{L^2_{x,v}}\leq C\|(I-P)f\|_{L^2_{x,v}}.
\end{eqnarray}
Now, we substitute (\ref{divide}) into the BGK model (\ref{Linearized_Main}) to obtain
\begin{eqnarray}\label{(Pf)+(l-P)f}
\{\partial_t+v\cdot\nabla\}\tilde{P}f=\ell\big\{(I-\tilde{P})f\big\}+h(f),
\end{eqnarray}
where
\begin{eqnarray*}
\ell\big\{(I-\widetilde{P})f\big\}&\equiv&\{-\partial_t-v\cdot\nabla_x+L\}\{I-\tilde{P}\}f,\cr
h(f)&\equiv&\Gamma(f).
\end{eqnarray*}
The l.h.s of (\ref{(Pf)+(l-P)f}) is calculated as follows
\begin{eqnarray*}
\sum_{i}\Big\{v_i\partial^{i}c|v|^2+(\partial_tc+\partial^ib_i)v_i^2
+\sum_{j>i}(\partial^ib_j+\partial^jb_i)v_iv_j+(\partial_tb+\partial^ia)v_i+\partial_ta\Big\}\sqrt{m},
\end{eqnarray*}
where $\partial^{i}=\partial_{x_i}$. We then expand the l.h.s of (\ref{(Pf)+(l-P)f}) with respect to the basis:
\[\sqrt{m},~ v_i\sqrt{m},~ v_iv_j\sqrt{m},~ v_i^2\sqrt{m},~ v_i|v|^2\sqrt{m}\quad(1\leq i,j\leq 3).\]
Equating both sides of the above identity, we obtain the following result.
\begin{lemma}\label{MicroMacroEquations} $a$, $b$, $c$ satisfy the following relations.
\begin{eqnarray*}
\begin{array}{ll}
(1)&\nabla c=\ell_{c}+h_{c}\label{MM1},\\
(2)&\partial_t c+\partial^i b_i=\ell_{i}+h_{i}\label{MM2},\\
(3)&\partial^i b_j+\partial^j b_i =\ell_{ij}+h_{ij}\label{MM3},\\
(4)&\partial_t b_i+\partial^i a=\ell_{bi}+h_{bi}\label{MM4},\\
(5)&\partial_t a=\ell_{a}+h_{a}\label{MM5},
\end{array}
\end{eqnarray*}
where $\ell_{c}$, $\ell_{i}$, $\ell_{ij}$, $\ell_{bi}$, $\ell_{a}$ are coefficients of
the expansion of $\ell$ with respect to the preceding basis. Similarly, $h_{c}$, $h_{i}$, $h_{ij}$, $h_{bi}$, $h_{a}$ denotes
the corresponding coefficients of the expansion of $h$.
\end{lemma}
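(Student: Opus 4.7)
The plan is to carry out a direct basis expansion on both sides of the identity
\[
\{\partial_t+v\cdot\nabla\}\tilde{P}f=\ell\{(I-\tilde{P})f\}+h(f)
\]
and to read off the five relations by matching coefficients in front of the velocity monomials $\sqrt{m},\ v_i\sqrt{m},\ v_iv_j\sqrt{m}\,(i<j),\ v_i^2\sqrt{m}$, and $v_i|v|^2\sqrt{m}$. The argument is entirely algebraic and requires no smallness assumption on $E(t)$.

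First I would verify the explicit expansion of the left-hand side already displayed in the excerpt. Since $\tilde{P}f=a\sqrt{m}+b\cdot v\sqrt{m}+c|v|^2\sqrt{m}$, the time derivative contributes $(\partial_t a+\partial_t b\cdot v+\partial_t c\,|v|^2)\sqrt{m}$, while $v\cdot\nabla_x$ differentiates only the coefficients $a,b,c$ and multiplies by the components of $v$. Collecting the five types of velocity-monomials (constant, $v_i$, $v_iv_j$ with $i\neq j$, $v_i^2$, and $v_i|v|^2$) yields the stated formula. In particular, off-diagonal terms of the form $\partial^ib_j$ and $\partial^jb_i$ both multiply $v_iv_j\sqrt{m}$, producing the symmetric combination $\partial^ib_j+\partial^jb_i$.

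Next I would expand $\ell\{(I-\tilde{P})f\}+h(f)$ in the same ordered basis, \emph{defining} $\ell_a,\ell_{bi},\ell_{ij},\ell_i,\ell_c$ (respectively $h_a,h_{bi},h_{ij},h_i,h_c$) as the coefficients of $\sqrt{m},v_i\sqrt{m},v_iv_j\sqrt{m}\,(i<j),v_i^2\sqrt{m}$, and $v_i|v|^2\sqrt{m}$ in the expansion of $\ell\{(I-\tilde{P})f\}$ (respectively $h(f)$). Such an expansion is permitted because the functions $\{v^\gamma\sqrt{m}\}$ for the relevant multi-indices $\gamma$ are linearly independent in $L^2_v(\bbr^3)$ (being polynomials of distinct degrees and parities multiplied by a common non-vanishing Gaussian), so the projection onto this finite-dimensional subspace is well defined and the coefficient read-off is unique.

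Equating the coefficients of each basis element in the identity then produces, respectively, (5) from $\sqrt{m}$, (4) from $v_i\sqrt{m}$, (3) from $v_iv_j\sqrt{m}$ with $i<j$, (2) from $v_i^2\sqrt{m}$, and (1) from $v_i|v|^2\sqrt{m}$. The only subtlety worth flagging, which is what I expect to be the bookkeeping obstacle, is to keep the diagonal basis $v_i^2\sqrt{m}$ separated from the off-diagonal $v_iv_j\sqrt{m}$ with $i<j$; otherwise the relation (2) for each $i$ gets entangled with the symmetric relation (3) for $i\neq j$. Once this separation is maintained, relations (1)–(5) follow immediately.
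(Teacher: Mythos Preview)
Your proposal is correct and follows essentially the same approach as the paper: the paper computes $\{\partial_t+v\cdot\nabla\}\tilde{P}f$ explicitly as a linear combination of the thirteen basis elements $\sqrt{m},\,v_i\sqrt{m},\,v_iv_j\sqrt{m},\,v_i^2\sqrt{m},\,v_i|v|^2\sqrt{m}$, expands the right-hand side $\ell\{(I-\tilde P)f\}+h(f)$ with respect to the same basis (thereby \emph{defining} the coefficients $\ell_c,\ell_i,\ldots$ and $h_c,h_i,\ldots$), and reads off (1)--(5) by equating coefficients. Your additional remark on the linear independence of the $v^\gamma\sqrt{m}$ and the separation of diagonal and off-diagonal quadratic terms is a helpful clarification but not a departure from the paper's method.
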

For brevity, we define $\tilde\ell$ and $\tilde h$ as
\begin{eqnarray*}
\tilde\ell&\equiv&\ell_c+\sum_i\ell_i+\sum_{i,j}\ell_{ij}+\sum_{i}\ell_{bi}+\ell_a,\cr
\tilde h&\equiv&h_c+\sum_ih_i+\sum_{i,j}h_{ij}+\sum_{i}h_{bi}+h_a.
\end{eqnarray*}
%
%
%
\begin{lemma}\label{EllipticEstimate_b} Let $|\alpha|\leq N-1$. Then we have
\begin{eqnarray*}
&&(1)~\|\nabla_x \partial^{\alpha}b_i\|_{L^2_x}+\|\partial_i\partial^{\alpha}b_i\|_{L^2_x}\leq
\sum_{|\alpha|\leq N-1}\|\partial^{\alpha}\tilde\ell\|_{L^2_{x,v}}
+\sum_{|\alpha|\leq N-1}\|\partial^{\alpha}\tilde h\|_{L^2_{x,v}},\cr
&&(2)~\|\partial^{\alpha}_tb_i\|_{L^2_x}\leq \sum_{|\alpha|\leq N-1}\|\partial^{\alpha}\tilde\ell\|_{L^2_{x,v}}
+\sum_{|\alpha|\leq N-1}\|\partial^{\alpha}\tilde h\|_{L^2_{x,v}}.
\end{eqnarray*}
\end{lemma}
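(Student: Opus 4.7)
The plan is to use the macroscopic relations in Lemma \ref{MicroMacroEquations} to derive Poisson-type equations for $b$ and $a$ and then invoke elliptic regularity on the torus, exploiting the conservation laws \eqref{conservation.law.f} which imply that $a$, $b$, $c$ all have zero spatial mean (so $\|\partial^\alpha b_i\|_{L^2_x}\lesssim\|\nabla_x\partial^\alpha b_i\|_{L^2_x}$ etc.). This is the standard Guo-type macroscopic estimate adapted to the BGK setting.

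For part $(1)$, I would first handle the divergence term separately: setting $i=j$ in the symmetric-gradient relation $(3)$ of Lemma \ref{MicroMacroEquations} gives $2\partial^i b_i=\ell_{ii}+h_{ii}$ (no sum), so applying $\partial^\alpha$ and taking the $L^2_x$ norm yields the bound on $\|\partial_i\partial^\alpha b_i\|_{L^2_x}$ for free. For the full gradient, I would compute
\[
\Delta b_j=\sum_{i}\partial^i\partial^i b_j=\sum_{i}\partial^i\bigl(\ell_{ij}+h_{ij}-\partial^j b_i\bigr)=\sum_{i}\partial^i(\ell_{ij}+h_{ij})-\tfrac{1}{2}\partial^j\sum_{i}(\ell_{ii}+h_{ii}),
\]
where the second equality uses $(3)$ and the third uses the diagonal case just established. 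Since the right-hand side is in divergence form, a standard Poisson estimate on $\mathbb T^3$ (combined with the mean-zero property $\int b_j\,dx=0$) gives $\|\nabla_x b_j\|_{L^2_x}\lesssim \|\tilde\ell\|_{L^2_{x,v}}+\|\tilde h\|_{L^2_{x,v}}$; applying the same argument to $\partial^\alpha b_j$ for $|\alpha|\le N-1$ finishes part $(1)$.

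For part $(2)$, the relation $(4)$ gives the algebraic identity
\[
\partial_t\partial^\alpha b_i=-\partial^i\partial^\alpha a+\partial^\alpha(\ell_{bi}+h_{bi}),
\]
so the task reduces to an elliptic estimate for $a$. To produce one, I take $\sum_i\partial^i$ of $(4)$ and substitute $\partial^i b_i=\tfrac{1}{2}(\ell_{ii}+h_{ii})$ obtained above, yielding
\[
\Delta a=\sum_{i}\partial^i(\ell_{bi}+h_{bi})-\tfrac{1}{2}\,\partial_t\sum_{i}(\ell_{ii}+h_{ii}).
\]
Again the first piece is in divergence form, and $a$ has zero spatial mean by the conservation laws, so elliptic regularity on $\mathbb T^3$ gives $\|\nabla_x\partial^\alpha a\|_{L^2_x}$ controlled by $\partial^\alpha$-norms of $\tilde\ell$ and $\tilde h$ (the time derivative is absorbed into $\tilde\ell$, $\tilde h$ via the multi-index $\alpha$). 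Plugging this back into the identity above closes part $(2)$.

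The routine part is the elliptic regularity on the torus. The main bookkeeping obstacle is keeping track of orders of derivatives: the bound for $\partial_t\partial^\alpha b_i$ formally costs one extra time derivative because of the appearance of $\partial_t(\ell_{ii}+h_{ii})$ when estimating $\Delta a$, so one has to verify that the indices on the right-hand side really stay within $|\alpha|\le N-1$ as stated (or, more robustly, within $|\alpha|\le N$, which is still absorbed by the high-order energy $E(t)$ in subsequent applications). A second minor point is checking that the constants from the Poisson estimate and from replacing $L^2_x$ norms of $\ell_{ij},h_{ij},\ldots$ by the $L^2_{x,v}$ norms of $\tilde\ell,\tilde h$ are indeed uniform, which follows from the explicit form of the coefficients and the boundedness of the basis functions $\sqrt{m},v_i\sqrt{m},v_iv_j\sqrt m,\ldots$ in the relevant weighted spaces.
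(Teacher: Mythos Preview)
There is a genuine gap in your argument for part~(1): relation~(3) of Lemma~\ref{MicroMacroEquations} is an \emph{off-diagonal} identity. In the expansion of $\{\partial_t+v\cdot\nabla\}\tilde Pf$ the basis elements $v_iv_j\sqrt m$ (with $i\neq j$) and $v_i^2\sqrt m$ are distinguished; the diagonal coefficient yields relation~(2), namely $\partial_tc+\partial^ib_i=\ell_i+h_i$, not $2\partial^ib_i=\ell_{ii}+h_{ii}$. Consequently your formula $\Delta b_j=\sum_i\partial^i(\ell_{ij}+h_{ij})-\tfrac12\partial^j\sum_i(\ell_{ii}+h_{ii})$ is not valid: if one repairs the diagonal step using~(2), a term $\partial^j\partial_tc$ survives in $\Delta b_j$ and must be dealt with separately. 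The same error reappears in your derivation of $\Delta a$ in part~(2), where it produces an uncontrolled $\partial_t^2c$.

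The paper avoids this by a small algebraic trick: instead of $\Delta b_i$ it computes $-\Delta b_i-\partial_i\partial_ib_i$. Splitting into $j\neq i$ (use~(3)) and the doubled $j=i$ contribution (use~(2)), one finds that the two occurrences of $\partial_i\partial_tc$ appear with opposite signs and cancel exactly, leaving only first derivatives of $\ell$- and $h$-coefficients; the standard elliptic estimate then closes. For part~(2) the paper takes the short route you bypassed: since $\int_{\mathbb T^3}b_i\,dx=0$ by momentum conservation, Poincar\'e gives $\|\partial^{\alpha}b_i\|_{L^2_x}\le C\|\nabla_x\partial^{\alpha}b_i\|_{L^2_x}$ for any (possibly temporal) $|\alpha|\le N-1$, and part~(1) finishes the job---no detour through $a$ is needed.
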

\begin{proof}
$(1)$ Following \cite{Guo1,Guo2,Guo3}, we observe that
\begin{eqnarray*}
&&-\Delta b_i-\partial_i\partial_i b_i\cr
&&\hspace{1cm}=-\sum_{j\neq i}\partial^{j}(\partial^{j}b_i)-2\partial^{i}\partial^{i}b_i\cr
&&\hspace{1cm}=-\sum_{j\neq i}\underline{\partial^j(-\partial^i b_j}-\ell_{ij}-h_{ij})
-\underline{2\partial^{i}(-\partial_t c}+\ell_i+h_i)\cr
&&\hspace{2cm}(\mbox{ by Lemma }\ref{MicroMacroEquations} ~(3)\mbox{ and } (2))\cr
&&\hspace{1cm}=\sum_{j\neq i}(\partial^j\partial^i b_j+\partial^i\partial_t c)
+\sum_{j\neq i}(\partial^j\ell_{ij}-\partial^jh_{ij})-2\partial^{i}(\ell_i+h_i)\cr
&&\hspace{1cm}=\sum_{j\neq i}(\partial_i(-\partial_tc-\ell_{i}-h_i)+\partial^i\partial_t c)
+\sum_{j\neq i}(\partial^j\ell_{ij}-\partial^jh_{ij})-2\partial^{i}(\ell_i+h_i)\cr
&&\hspace{2cm}(\mbox{ by Lemma }\ref{MicroMacroEquations}~ (2) )\cr
&&\hspace{1cm}=\sum_{j\neq i}(\partial_i\ell_{i}+\partial^i h_i)
+\sum_{j\neq i}(\partial^j\ell_{ij}-\partial^jh_{ij})-2\partial^{i}(\ell_i+h_i).
\end{eqnarray*}
Then the result follows from the standard elliptic estimate.\newline
$(2)$ By Poincare inequality and Lemma \ref{EllipticEstimate_b}, we have
\begin{eqnarray*}
\|\partial^{\gamma_t}_tb\|_{L^2_x}&\leq&\|\nabla_x\partial^{\gamma_t} b\|_{L^2_x}\cr
&\leq&C\big(\|\partial^{\gamma_t}\tilde\ell\|_{L^2_x}+\|\partial^{\gamma_t}\tilde h\|_{L^2_x}\big).
\end{eqnarray*}
\end{proof}
%
%
%
\begin{lemma}\label{Estimate_C}
For $|\alpha|\leq N-1$, we have
\begin{eqnarray*}
&&(1)~ \|c\|_{L^2_x}\leq C\big(~\|\tilde\ell\|_{L^2_x}+\|\tilde h\|_{L^2_x}~\big).\cr
&&(2)~ \|\partial_t\partial^{\alpha} c\|_{L^2_x}\leq C\big(~\|\partial^{\alpha}\tilde\ell\|_{L^2_x}
+\|\partial^{\alpha}\tilde h\|_{L^2_x}~\big).\cr
&&(3)~ \|\nabla_x \partial^{\alpha}c\|_{L^2_x}\leq C\big(~\|\partial^{\alpha}\tilde\ell\|_{L^2_x}
+\|\partial^{\alpha}\tilde h\|_{L^2_x}~\big).
\end{eqnarray*}
\end{lemma}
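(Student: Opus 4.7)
The plan is to read off each of the three estimates directly from Lemma \ref{MicroMacroEquations}, which supplies five local identities relating space-time derivatives of $a$, $b_i$, $c$ to the coefficients $\ell_*, h_*$ of the expansion of $\ell\{(I-\tilde P)f\}$ and $h(f)$ against the basis $\sqrt m,\, v_i\sqrt m,\, v_iv_j\sqrt m,\, v_i^2\sqrt m,\, v_i|v|^2\sqrt m$. Each of the three parts corresponds to which identity is most natural to invoke.

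I would dispose of (3) first because it is the most immediate: starting from Lemma \ref{MicroMacroEquations}(1), $\nabla c = \ell_c + h_c$, I apply $\partial^{\alpha}$ to both sides and pass to $L^2_x$. Since $\ell_c$ is one of the summands in $\tilde\ell$ (and similarly for $h_c$ in $\tilde h$), this yields
\[
\|\nabla_x \partial^{\alpha} c\|_{L^2_x} \leq \|\partial^{\alpha} \ell_c\|_{L^2_x} + \|\partial^{\alpha} h_c\|_{L^2_x} \leq \|\partial^{\alpha}\tilde\ell\|_{L^2_x} + \|\partial^{\alpha}\tilde h\|_{L^2_x},
\]
which is exactly (3).

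For (1) the crucial input is the conservation law $\int_{\bbt^3\times\bbr^3} f|v|^2\sqrt m\,dx\,dv = 0$ from \eqref{conservation.law.f}, which forces $\int_{\bbt^3} c(x,t)\,dx = 0$. With this zero-mean property, the Poincar\'e inequality on the torus gives $\|c\|_{L^2_x}\leq C\|\nabla_x c\|_{L^2_x}$, and specializing (3) to $\alpha = 0$ closes the estimate.

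For (2) I would take $\partial^{\alpha}$ of Lemma \ref{MicroMacroEquations}(2), $\partial_t c + \partial^i b_i = \ell_i + h_i$, and rearrange to
\[
\partial_t \partial^{\alpha} c = \partial^{\alpha} \ell_i + \partial^{\alpha} h_i - \partial^i \partial^{\alpha} b_i.
\]
The first two terms are bounded by $\|\partial^{\alpha}\tilde\ell\|_{L^2_x} + \|\partial^{\alpha}\tilde h\|_{L^2_x}$ directly, and the divergence term $\|\partial^i\partial^{\alpha} b_i\|_{L^2_x}$ is exactly what Lemma \ref{EllipticEstimate_b}(1) controls. I do not anticipate any serious obstacle in this lemma, since the pointwise macroscopic identities (Lemma \ref{MicroMacroEquations}) and the elliptic bound on $b$ (Lemma \ref{EllipticEstimate_b}) have already done the analytical work; the only subtle point worth flagging is that the Poincar\'e step in (1) genuinely relies on the conservation law being preserved by the flow, which is precisely why \eqref{conservation.law.f} is carried as a hypothesis throughout.
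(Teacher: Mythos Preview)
Your proposal is correct and follows essentially the same route as the paper: the paper proves (1) via Poincar\'e (using $\int c\,dx=0$) combined with Lemma~\ref{MicroMacroEquations}(1), proves (2) from Lemma~\ref{MicroMacroEquations}(2) together with the bound on $\nabla\cdot b$ from Lemma~\ref{EllipticEstimate_b}, and notes that (3) follows directly from Lemma~\ref{MicroMacroEquations}(1). Your write-up is in fact slightly more explicit than the paper's in part~(2), where you name Lemma~\ref{EllipticEstimate_b} to handle $\|\partial^i\partial^{\alpha}b_i\|_{L^2_x}$; the paper leaves this implicit.
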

\begin{proof}
(1) By Poincare inequality and Lemma \ref{MicroMacroEquations}(1), we have
\begin{eqnarray*}
\|c\|_{L^2_x}&\leq&\|\nabla c\|_{L^2_x}\cr
&\leq&C\|\ell_c+h_c\|_{L^2_x}\cr
&\leq&C\big(\|\tilde\ell\|_{L^2_x}+\|\tilde h\|_{L^2_x}\big),
\end{eqnarray*}
where we used the conservation of energy:
\[\int c(x)dx=0.\]
(2) By (2) in Lemma \ref{MicroMacroEquations}, we have
\begin{eqnarray*}
\|\partial_tc\|_{L^2_x}&=&\|-\nabla\cdot b+\ell+h\|_{L^2_x}\cr
&\leq& C\big(~\|\tilde \ell \|_{L^2_x}+\|\tilde h\|_{L^2_x}).
\end{eqnarray*}
(3) follows directly from (1).
\end{proof}
%
%
%
\begin{lemma}\label{Estimate_A1}
Let $|\alpha|\leq N-1$. For (2), we assume further that $\alpha$ is purely spatial: $\alpha=[0,\alpha_1,\alpha_2,\alpha_3]\neq0$.
Then we have
\begin{eqnarray*}
&&(1)~\|\partial_t \partial^{\alpha}a\|_{L^2_x}\leq C(~\|\partial^{\alpha}\tilde\ell\|_{L^2_x}+\|\partial^{\alpha}\tilde h\|_{L^2_x}~),\cr
&&(2)~\|\nabla \partial^{\alpha}_xa\|_{L^2_x}\leq \sum_{|\bar\alpha|\leq N-1}\|\partial^{\bar\alpha}\tilde\ell\|_{L^2_x}
+\sum_{|\bar\alpha|\leq N-1}\|\partial^{\bar\alpha}\tilde h\|_{L^2_x},\cr
&&(3)~\|a\|_{L^2_x}\leq \|\tilde\ell\|_{L^2_x}+\|\tilde h\|_{L^2_x}.
\end{eqnarray*}
\end{lemma}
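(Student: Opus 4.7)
The plan is to read off $a$ directly from two of the macro-micro identities in Lemma \ref{MicroMacroEquations}, namely equation (5) $\partial_t a = \ell_a + h_a$ and equation (4) $\partial_t b_i + \partial^i a = \ell_{bi} + h_{bi}$, which give respectively the time derivative and the spatial gradient of $a$ in terms of the coefficients of $\ell$ and $h$ plus a time derivative of $b$. Estimates on $\partial_t b$ are then imported from Lemma \ref{EllipticEstimate_b}, and the zeroth order estimate on $a$ itself is recovered from its spatial gradient via the Poincar\'e inequality once mass conservation is invoked. Throughout, the coefficients $\ell_a, \ell_{bi}, h_a, h_{bi}$ are treated as contributions to $\tilde\ell$ and $\tilde h$, so no derivatives of $f$ need to be reintroduced.

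For (1), simply apply $\partial^{\alpha}$ to equation (5) and take the $L^2_x$ norm; since $\ell_a$ and $h_a$ are among the summands of $\tilde\ell$ and $\tilde h$, this gives $\|\partial_t\partial^{\alpha} a\|_{L^2_x} \le C(\|\partial^{\alpha}\tilde\ell\|_{L^2_x} + \|\partial^{\alpha}\tilde h\|_{L^2_x})$ immediately. For (2), apply $\partial^{\alpha}$ with $\alpha$ purely spatial to equation (4); since $\partial^{\alpha}$ commutes with $\partial_t$ in this case, one obtains $\partial^{\alpha}\partial^{i} a = -\partial_t\partial^{\alpha} b_i + \partial^{\alpha}\ell_{bi} + \partial^{\alpha}h_{bi}$. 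Taking $L^2_x$ norms, summing over $i$, and controlling $\|\partial_t\partial^{\alpha} b_i\|_{L^2_x}$ by Lemma \ref{EllipticEstimate_b}(2) (which handles exactly such mixed time-space derivatives of $b$ with the required zero-mean hypothesis on $b$ coming from momentum conservation) produces the claimed inequality, summed over $|\bar\alpha|\le N-1$.

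For (3), note that the first identity in (\ref{conservation.law.f}) gives $\int_{\bbt^3}a(x,t)\,dx = 0$, so the Poincar\'e inequality on the torus yields $\|a\|_{L^2_x} \le C\|\nabla a\|_{L^2_x}$. The gradient $\nabla a$ is then estimated from equation (4) with $\alpha=0$ exactly as in part (2): $\|\nabla a\|_{L^2_x} \le \|\partial_t b\|_{L^2_x} + \|\tilde\ell\|_{L^2_x} + \|\tilde h\|_{L^2_x}$, and $\|\partial_t b\|_{L^2_x}$ is handled by Lemma \ref{EllipticEstimate_b}(2). The main difficulty is purely bookkeeping: one must verify that the purely-spatial restriction on $\alpha$ in (2) is what allows $\partial^{\alpha}$ to be interchanged with $\partial_t$ in equation (4), so that Lemma \ref{EllipticEstimate_b}(2) is applicable, and that in (3) the zero-mean condition for Poincar\'e is correctly traced back to the conservation laws rather than assumed. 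Once these are verified, the three estimates follow in cascade from (5), (4), and Poincar\'e respectively, with no further analytic input.
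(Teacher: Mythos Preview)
Your proposal is correct and follows essentially the same route as the paper: part (1) from identity (5), part (2) from identity (4) together with Lemma~\ref{EllipticEstimate_b} for the $\partial_t b$ term, and part (3) from Poincar\'e plus mass conservation. The only cosmetic difference is that in (2) the paper writes $\Delta\partial^{\alpha}a=\nabla\cdot(-\partial_t\partial^{\alpha}b+\partial^{\alpha}\ell_b+\partial^{\alpha}h_b)$ and tests against $\partial^{\alpha}a$ before invoking Lemma~\ref{EllipticEstimate_b}, whereas you read off $\partial^i\partial^{\alpha}a$ directly from (4); the resulting bound is identical.
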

\begin{proof}
(1) This follows directly from Lemma \ref{MicroMacroEquations} (5).\newline
(2) By Lemma \ref{MicroMacroEquations} (4), we have
\begin{eqnarray*}
\triangle \partial^{\alpha}a&=&\nabla\cdot\nabla \partial^{\alpha}a\nonumber\cr
&=&\nabla\cdot(-\partial^t \partial^{\alpha}b+\partial^{\alpha}\ell_b+\partial^{\alpha}h_b)\nonumber\cr
&=&-\nabla\partial^t \partial^{\alpha}b+\nabla\partial^{\alpha}\ell_b+\nabla \partial^{\alpha}h_b.\label{Estimates_A_x_inter}
\end{eqnarray*}
We then multiply $\nabla a$ to both sides and use integrate by parts to see
\begin{eqnarray*}
\|\nabla \partial^{\alpha}_xa\|_{L^2_{x}}
&\leq&\|\partial^t \partial^{\alpha}b\|_{L^2_{x}}+\|\partial^{\alpha}\ell_b\|+\|\partial^{\alpha}h_b\|_{L^2_{x}}\cr
&\leq&\|\partial_x\partial_t\partial^{\alpha-1}b\|_{L^2_{x}}+\|\partial^{\alpha}\ell_b\|+\|\partial^{\alpha}h_b\|_{L^2_{x}}\cr
&\leq& \sum_{|\bar\alpha|\leq N-1}\|\partial^{\bar\alpha}\tilde\ell\|_{L^2_x}
+\sum_{|\bar\alpha|\leq N-1}\|\partial^{\bar\alpha}\tilde h\|_{L^2_x}.
\end{eqnarray*}
In the last line, we employed Lemma \ref{EllipticEstimate_b}.\newline
(3) follows from Poincare inequality combined with (2) and the conservation of mass:
\begin{eqnarray*}
\|a\|_{L^2_x}\leq \|\nabla a\|_{L^2_x}
\end{eqnarray*}
\end{proof}
\begin{lemma}\label{Estimates_ELL}For $|\alpha|\leq N-1$, we have
\begin{eqnarray*}
&&(1) \sum_{|\alpha|\leq N-1}
\|\partial^{\alpha}\ell\|_{L^2_{x}}\leq
C\sum_{|\gamma|\leq N} \|(I-\tilde{P})\partial^{\alpha} f\|_{L^2_{x,v}},\cr
&&(2) \sum_{|\alpha|\leq N}
\|\partial^{\alpha} h\|_{L^2_{x}}\leq
C\big(\sqrt{M_0}+M_0\big)\sum_{|\alpha|\leq N} \|\partial^{\alpha} f\|_{L^2_{x,v}}.
\end{eqnarray*}
\end{lemma}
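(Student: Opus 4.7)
The plan is to prove both bounds by reading each coefficient $\ell_{\ast}$ or $h_{\ast}$ as an $L^2_v$-inner product of the full object against one of the five fixed polynomial-times-$\sqrt{m}$ basis functions, then using Cauchy--Schwarz in $v$ followed by integration in $x$. Each basis element lies in $L^2_v$, and multiplication by one factor of $v$ (needed for the transport term) still yields an $L^2_v$ function, so these inner products are dominated pointwise in $x$ by $\|\cdot\|_{L^2_v}$ of the underlying functions.

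For (1), I first commute $\partial^{\alpha}$ past $\ell$ to write
\[
\partial^{\alpha}\ell\{(I-\tilde{P})f\} = \{-\partial_t - v\cdot\nabla_x + L\}\,\partial^{\alpha}(I-\tilde{P})f.
\]
Each coefficient of the basis expansion is then $\langle \partial^{\alpha}\ell,\phi_{\ast}\rangle_{L^2_v}$ for an appropriate dual element $\phi_{\ast}$. By Cauchy--Schwarz in $v$ together with boundedness of $\|\phi_{\ast}\|_{L^2_v}$ and $\|v\,\phi_{\ast}\|_{L^2_v}$, each such coefficient is dominated pointwise in $x$ by a sum of $L^2_v$ norms of $\partial_t\partial^{\alpha}(I-\tilde{P})f$, $\nabla_x\partial^{\alpha}(I-\tilde{P})f$, and $L\partial^{\alpha}(I-\tilde{P})f$. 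Only the transport term raises the spatial derivative order, by one, so all contributions involve derivatives of total order at most $N$. Integrating in $x$ and summing over $|\alpha|\leq N-1$ and the five coefficients yields (1).

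For (2), each coefficient $h_{\ast}$ is $\langle \partial^{\alpha}\Gamma(f),\phi_{\ast}\rangle_{L^2_v}$. I invoke Proposition~\ref{bilinear.estimate}(1) with $r=\phi_{\ast}$ to dominate this inner product pointwise in $x$ by sums of the form $\|\partial^{\alpha_1}f\|_{L^2_{x,v}}\|\partial^{\alpha_2}f\|_{L^2_v}$ for the quadratic pieces ($\Gamma_1,\Gamma_2,\Gamma_3$) and $\|\partial^{\alpha_1}f\|_{L^2_{x,v}}\|\partial^{\alpha_2}f\|_{L^2_v}\|\partial^{\alpha_3}f\|_{L^2_v}$ for the cubic piece $\Gamma_4$, with total derivative order $\leq N$. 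I assign the index with the largest $|\alpha_i|$ to the $L^2_{x,v}$ factor and embed the remaining $L^2_v$ factors into $L^{\infty}_x L^2_v$ via the Sobolev embedding (\ref{Sobolev}); each such factor is then bounded by $\sup_x\|\partial^{\alpha_i}f\|_{L^2_v}\leq C\sqrt{E(t)}\leq C\sqrt{M_0}$. After integration in $x$, one retains a single $\|\partial^{\gamma}f\|_{L^2_{x,v}}$ with $|\gamma|\leq N$, the quadratic pieces contribute a $\sqrt{M_0}$ prefactor, and the cubic $\Gamma_4$ contributes $M_0$, giving (2).

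The only genuine bookkeeping issue is confirming that for every admissible splitting $|\alpha_1|+|\alpha_2|(+|\alpha_3|)\leq|\alpha|\leq N$ the indices assigned to the $L^{\infty}_x$-embedded factors satisfy $|\alpha_i|+2\leq N$, which costs two derivatives from the Sobolev embedding. Since the smallest index is at most $\lfloor N/2\rfloor$, this reduces to $\lfloor N/2\rfloor+2\leq N$, which holds precisely because $N>4$, so the estimate closes without additional regularity.
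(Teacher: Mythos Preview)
Your proof is correct and follows the same strategy as the paper: express each coefficient as a $v$-integral of $\ell$ or $h$ against a fixed basis element of the thirteen-dimensional span of $\{\sqrt{m},\, v_i\sqrt{m},\, v_iv_j\sqrt{m},\, v_i|v|^2\sqrt{m}\}$ (not five --- a harmless miscount), then apply Cauchy--Schwarz in $v$ for part (1) and the bilinear estimate plus Sobolev embedding for part (2). The only cosmetic difference is that you invoke Proposition~\ref{bilinear.estimate}(1) pointwise in $x$ for the nonlinear term while the paper cites part (3); both routes pass through the same distribution-of-derivatives and $H^2\hookrightarrow L^\infty$ argument and yield the same $(\sqrt{M_0}+M_0)$ prefactor.
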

\begin{proof}
(1) Note that there exists constants $\lambda_n$ such that $\partial^{\alpha}\ell$ takes the following form
\begin{eqnarray*}
\sum^{13}_{n=1}\lambda^{n}\int_{\bbr^3}\partial^{\alpha}\ell\big\{(I-\tilde{P})f\big\}\cdot\varepsilon_n(v)dv,
\end{eqnarray*}
where $\varepsilon_n$ denotes the orthogonal basis for the 13 dimensional space spanned by
\[
\{\sqrt{m}, v_i\sqrt{m}, v_i^2\sqrt{m}, v_iv_j\sqrt{m}, |v|^2v_i\sqrt{m}~|~1\leq i,j\leq 3\}.
\]
We then observe that
\begin{eqnarray*}
&&\|\partial^{\alpha}\ell(\{I-\tilde{P}\}f)\cdot \varepsilon_n(v)dv\|^2_{L^2_x}\cr
&&\hspace{0.5cm}=\Big\|\int\big(-\{\partial_t+v\cdot \nabla+L\}(I-\tilde{P})\partial^{\alpha}f\big)\cdot\varepsilon_n(v)dv\Big\|_{L^2_x}\cr
&&\hspace{0.5cm}\leq\int|\varepsilon_n(v)|dv\times\cr
&&\hspace{0.5cm}\int|\varepsilon_n(v)|\Big\{|(I-\tilde{P})\partial^0\partial^{\alpha}f|^2
+|v|^2|(I-\tilde{P})\nabla_x\partial^{\alpha}f|^2+|(L(I-\tilde{P})\partial^{\alpha}f|^2\Big\}dxdv\cr
&&\hspace{0.5cm}\leq C\Big\{\|(I-\tilde{P})\partial^0\partial^{\alpha}f\|_{L^2_{x,v}}+\|(I-\tilde{P})\nabla\partial{\alpha}f\|_{L^2_{x,v}}
+\|(I-\tilde{P})\partial^{\alpha}f\|_{L^2_{x,v}}\Big\}^2.
\end{eqnarray*}
This completes the proof of (1).\newline
(2) As in (1), terms in $\partial^{\alpha} h$ can be presented as
\begin{eqnarray*}
\sum^{13}\bar\lambda^{n}\int_{\bbr^3}\partial^{\alpha}\Gamma(f,f)\cdot\varepsilon_n(v)dv
\end{eqnarray*}
for some constants $\bar\lambda_n$
We now apply Lemma \ref{bilinear.estimate} (3) to get
\begin{eqnarray*}
\Big\|\int_{\bbr^3}\partial^{\alpha}\Gamma(f)\cdot \varepsilon_n(v)dv\Big\|_{L^2_x}
&\leq&\sum_{|\alpha_1|+|\alpha_2|\leq|\alpha|}
\Big\|\int_{\bbr^3}\Gamma_{1,2,3}(\partial^{\alpha_1}f,\partial^{\alpha_2}f)\cdot \varepsilon_n(v)dv\Big\|_{L^2_x}\cr
&+&\sum_{\substack{|\alpha_1|+|\alpha_2|+|\alpha_3|\\\leq|\alpha|}}
\Big\|\int_{\bbr^3}\Gamma_{4}(\partial^{\alpha_1}f,\partial^{\alpha_2}f,\partial^{\alpha_2}f)\cdot \varepsilon_n(v)dv\Big\|_{L^2_x}\cr
&\leq&C\big(\sqrt{M}+M\big)\sum_{|\alpha|\leq N}\|\partial^{\alpha}f\|_{L^2_{x,v}}.
\end{eqnarray*}
This completes the proof.
\end{proof}
We can now prove the main theorem of this section.
%
%
%
%
\begin{theorem} \label{coercivity2}Let $f$ be a classical solution of (\ref{main.1}). Then there exists
$M$ and $\delta=\delta(M)$ such that if
\[
\sum_{|\alpha|\leq N}\|\partial^{\alpha}f(t)\|^2_{L^2_{x,v}}\leq M,
\]
then
There exists $\delta>0$ such that
\begin{equation*}
\sum_{|\alpha|\leq N}\langle L\partial^{\alpha} f,\partial^{\alpha} f\rangle\leq-\delta
\sum_{|\alpha|\leq N}\|\partial^{\alpha}f\|^2_{L^2_{x,v}}.
\end{equation*}
\end{theorem}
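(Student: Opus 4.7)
The target estimate upgrades the purely microscopic coercivity of Lemma \ref{coercivity1} to a full norm coercivity, by absorbing the hydrodynamic (macroscopic) component into the dissipative term modulo a small nonlinear remainder. The plan follows the by-now standard macro-micro decomposition strategy adapted to the BGK structure built up in Lemmas \ref{MicroMacroEquations}--\ref{Estimates_ELL}.

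The first step is to apply Lemma \ref{coercivity1} to each derivative $\partial^{\alpha}f$ and sum: this gives
\[
\sum_{|\alpha|\leq N}\langle L\partial^{\alpha}f,\partial^{\alpha}f\rangle
=-\nu_c\sum_{|\alpha|\leq N}\|(I-P)\partial^{\alpha}f\|^2_{L^2_{x,v}}.
\]
Since $P$ is an orthogonal projection, $\|\partial^{\alpha}f\|^2_{L^2_{x,v}}=\|P\partial^{\alpha}f\|^2_{L^2_{x,v}}+\|(I-P)\partial^{\alpha}f\|^2_{L^2_{x,v}}$, so the theorem reduces to the macroscopic control
\[
\sum_{|\alpha|\leq N}\|P\partial^{\alpha}f\|^2_{L^2_{x,v}}
\leq K\sum_{|\alpha|\leq N}\|(I-P)\partial^{\alpha}f\|^2_{L^2_{x,v}}
\]
for some constant $K=K(M)$. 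Invoking the norm equivalence (\ref{eqivalence}) I will work with the decomposition $f=\tilde{P}f+(I-\tilde{P})f$, so that controlling $\|P\partial^{\alpha}f\|$ is equivalent to controlling $\sum_{|\alpha|\leq N}(\|\partial^{\alpha}a\|^2_{L^2_x}+\|\partial^{\alpha}b\|^2_{L^2_x}+\|\partial^{\alpha}c\|^2_{L^2_x})$.

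Next I chain together the elliptic-type estimates of Lemmas \ref{EllipticEstimate_b}, \ref{Estimate_C} and \ref{Estimate_A1}. These provide, for every $|\alpha|\leq N$ (splitting into cases $\alpha=0$, purely temporal $\alpha$, and $\alpha$ containing at least one spatial derivative, so as to use parts (1), (2), (3) of each lemma), a bound
\[
\sum_{|\alpha|\leq N}\bigl(\|\partial^{\alpha}a\|^2_{L^2_x}+\|\partial^{\alpha}b\|^2_{L^2_x}+\|\partial^{\alpha}c\|^2_{L^2_x}\bigr)
\leq C\sum_{|\bar\alpha|\leq N-1}\bigl(\|\partial^{\bar\alpha}\tilde\ell\|^2_{L^2_x}+\|\partial^{\bar\alpha}\tilde h\|^2_{L^2_x}\bigr).
\]
Here the conservation laws (\ref{conservation.law.f}) are exactly what make Poincaré inequality applicable in the base cases of parts (1) of Lemma \ref{Estimate_C} and (3) of Lemma \ref{Estimate_A1}.

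Now I feed in Lemma \ref{Estimates_ELL}. Part (1) produces
\[
\sum_{|\bar\alpha|\leq N-1}\|\partial^{\bar\alpha}\tilde\ell\|^2_{L^2_x}
\leq C\sum_{|\gamma|\leq N}\|(I-\tilde P)\partial^{\gamma}f\|^2_{L^2_{x,v}}
\leq C'\sum_{|\gamma|\leq N}\|(I-P)\partial^{\gamma}f\|^2_{L^2_{x,v}},
\]
while part (2) produces $\sum_{|\bar\alpha|\leq N-1}\|\partial^{\bar\alpha}\tilde h\|^2_{L^2_x}\leq C(\sqrt{M_0}+M_0)^2\sum_{|\alpha|\leq N}\|\partial^{\alpha}f\|^2_{L^2_{x,v}}$. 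Combining the two with the macroscopic bound above yields
\[
\sum_{|\alpha|\leq N}\|P\partial^{\alpha}f\|^2_{L^2_{x,v}}
\leq C_1\sum_{|\alpha|\leq N}\|(I-P)\partial^{\alpha}f\|^2_{L^2_{x,v}}
+C_2(\sqrt{M_0}+M_0)^2\sum_{|\alpha|\leq N}\|\partial^{\alpha}f\|^2_{L^2_{x,v}}.
\]
Splitting the last sum into its hydrodynamic and microscopic parts and choosing $M$ small enough that $C_2(\sqrt{M}+M)^2<\tfrac12$, I absorb the $\|P\partial^{\alpha}f\|^2$ contribution on the right into the left, arriving at $\sum\|P\partial^{\alpha}f\|^2\leq K\sum\|(I-P)\partial^{\alpha}f\|^2$ and thus at the desired $\delta=\nu_c/(1+K)$.

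The main obstacle I anticipate is a purely bookkeeping one: the lemmas of Section 6 are stated for $|\alpha|\leq N-1$ and with the derivative of a hydrodynamic coefficient appearing on the left, so I must carefully verify that each derivative $\partial^{\alpha}(a,b,c)$ with $|\alpha|\leq N$ is actually recovered (splitting into $\alpha=0$, temporal, and at-least-one-spatial cases) and that the index shift $N-1\mapsto N$ in the right-hand side of Lemma \ref{Estimates_ELL}(1) is consistent with what I need. Everything else is a direct concatenation of the preceding lemmas and a smallness absorption.
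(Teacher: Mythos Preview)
Your proposal is correct and follows essentially the same route as the paper: start from Lemma~\ref{coercivity1}, pass to the $\tilde P$-decomposition via (\ref{eqivalence}), bound the macroscopic moments $(a,b,c)$ through the chain Lemmas~\ref{EllipticEstimate_b}--\ref{Estimate_A1}, estimate $\tilde\ell$ and $\tilde h$ by Lemma~\ref{Estimates_ELL}, and absorb the $O(\sqrt{M})$ remainder by smallness. The bookkeeping concern you flag (recovering all $|\alpha|\leq N$ derivatives of $a,b,c$ from lemmas indexed by $|\alpha|\leq N-1$) is indeed the only thing to check, and it goes through exactly as you outline.
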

\begin{proof}
By Lemma \ref{EllipticEstimate_b}~-~\ref{Estimate_A1}, we have
\begin{eqnarray*}
\sum_{|\alpha|\leq N}\|\partial^{\alpha}a\|_{L^2_{x}}+
\|\partial^{\alpha}b\|_{L^2_{x}}+\|\partial^{\alpha}c\|_{L^2_{x}}
\leq\sum_{|\alpha|\leq N-1}\|\partial^{\alpha}\ell\|_{L^2_{x,v}}+\sum_{|\alpha|\leq N}\|\partial^{\alpha} h\|_{L^2_{x,v}}.
\end{eqnarray*}
We then apply Lemma \ref{Estimates_ELL} to see
\begin{eqnarray*}
\sum_{|\alpha|\leq N}\|\partial^{\alpha}\tilde{P}f\|_{L^2_{x,v}}&\leq&
C\sum_{|\alpha|\leq N}\|\partial^{\alpha}a\|_{L^2_{x}}+
\|\partial^{\alpha}b\|_{L^2_{x}}+\|\partial^{\alpha}c\|_{L^2_{x}}\cr
&\leq& C\sum_{|\alpha|\leq N} \|(I-\tilde{P})\partial^{\alpha} f\|_{L^2_{x,v}}
+C\sqrt{M_0}\sum_{|\alpha|\leq N} \|\partial^{\alpha} f\|_{L^2_{x,v}}.
\end{eqnarray*}
Hence we have from Lemma \ref{coercivity1} and the equivalence estimate (\ref{eqivalence})
\begin{eqnarray*}
\sum_{|\alpha|\leq N}\langle L\partial^{\alpha} f,\partial^{\alpha} f\rangle
&=&-\nu_c\sum_{|\alpha|\leq N}\|(I-P)\partial^{\alpha}f\|^2_{L^2_{x,v}}\cr
&\leq&-\nu_cC\sum_{|\alpha|\leq N}\|(I-\tilde{P})\partial^{\alpha}f\|^2_{L^2_{x,v}}\cr
&\leq&-\nu_c C_1\Big\{\sum_{|\alpha|\leq N}\|\partial^{\alpha}\tilde{P}f\|^2_{L^2_{x,v}}-C_2\sqrt{M}\sum_{|\alpha|\leq N}
\|\partial^{\alpha}f\|^2_{L^2_{x,v}}\Big\}\cr
&\leq&-\frac{\min\{\nu_c, \nu_c C_1\}}{2}
\Big\{\sum_{|\alpha|\leq N}\|\partial^{\alpha}\tilde{P}f\|^2_{L^2_{x,v}}+\|\partial^{\alpha}(1-\tilde{P})f\|^2_{L^2_{x,v}}\Big\}\cr
&&+\frac{1}{2}\nu_c C_2\sqrt{M}\sum_{|\alpha|\leq N}
\|\partial^{\alpha}f\|^2_{L^2_{x,v}}\cr
&\leq&-\frac{\nu_c}{2}\Big\{\min\{1,C_1\}-C_2\sqrt{M}\hspace{0.1cm}\Big\}\sum_{|\alpha|\leq N}\|\partial^{\alpha}f\|^2_{L^2_{x,v}}.
\end{eqnarray*}
We then choose $M$ sufficiently small such that
\[
\min\{1,C_1\}>C_2\sqrt{M}
\]
to obtain the desired result.
\end{proof}
\section{Proof of the main theorem}
In this section, we derive a refined energy estimate for the Boltzmann-BGK model and establish the main result.
Let $f$ be the unique smooth solution constructed in Theorem \ref{local.existence}.
First we take $\partial^{\alpha}$ on both sides of (\ref{Linearized_Main}) to have
\begin{eqnarray*}
[\partial_t+v\cdot \nabla +L]\partial^{\alpha}f=\partial^{\alpha}\Gamma(f).
\end{eqnarray*}
We multiply $\partial^{\alpha}f$, integrate over $\bbt^d\times\bbr^d$ and apply Lemma \ref{bilinear.estimate} (1)
and Theorem \ref{coercivity2} to obtain
\begin{eqnarray*}
E^{\alpha}:\quad\frac{1}{2}\frac{d}{dt}\|\partial^{\alpha}f\|^2_{L^2_{x,v}}+\delta\|\partial^{\alpha}f\|^2_{L^2_{x,v}}\leq
C\sqrt{E(t)}|||f|||^2.
\end{eqnarray*}
For $\beta\neq0$, we take $\partial^{\alpha}_{\beta}$ to obtain
\begin{eqnarray}\label{partial.alpha.beta}
[\partial_t+v\cdot \nabla+\nu_c]\partial^{\alpha}_{\beta} f&=&
-\sum_{i}\partial^{\alpha+\bar{e}_i}_{\beta-e_i}f^{n+1}
+\partial_{\beta}P\partial^{\alpha}f
+\partial^{\alpha}_{\beta}\Gamma(f),
\end{eqnarray}
where $e_1=(1,0,0), e_2=(0,1,0), e_3=(0,0,1)$ and  $\bar e_1=(0,1,0,0), \bar e_2=(0,0,1,0), \bar e_3=(0,0,0,1)$.
and we used the following relation:
\begin{eqnarray*}
\partial^{\alpha}_{\beta}(v\cdot \nabla_xf)&=&\partial^{\alpha}_{\beta}\Big\{\sum_{1\leq i\leq 3}v_i\partial^{e_i}f\Big\}\cr
&=&v\cdot \nabla_x\partial^{\alpha}_{\beta}f+\sum_{i}\sum_{\bar\beta\neq0}\partial_{\bar\beta}v_i\partial^{\alpha}_{\beta-\bar\beta}\partial^{e_i}f\cr
&=&v\cdot \nabla_x\partial^{\alpha}_{\beta}f+\sum_{i}\partial^{\alpha+\bar e_i}_{\beta-e_i}f.
\end{eqnarray*}
Multiplying  $\partial^{\alpha}_{\beta}f$ to both sides of (\ref{partial.alpha.beta}) and integrating over $\bbt^d\times\bbr^d$, we obtain by an almost identical manner
as in the local existence case
\begin{eqnarray*}
&&\frac{1}{2}\frac{d}{dt}\|\partial^{\alpha}_{\beta}f\|^2_{L^2_{x,v}}+\nu_c\|\partial^{\alpha}_{\beta}f\|^2_{L^2_{x,v}}\cr
&&\hspace{1cm}\leq -\sum_i\langle\partial^{\alpha+\bar{e}_i}_{\beta-e_i}f,\partial^{\alpha}_{\beta}f\rangle
+\nu_c\langle\partial_{\beta}P\partial^{\alpha}f,\partial^{\alpha}_{\beta}f\rangle
+\langle\partial^{\alpha}_{\beta}\Gamma(f),\partial^{\alpha}_{\beta}f\rangle\cr
&&\hspace{1cm}\leq \sum_iC_{\varepsilon}\|\partial^{\alpha+\bar{e}_i}_{\beta-e_i}f\|^2_{L^2_{x,v}}
+\varepsilon\|\partial^{\alpha}_{\beta}f\|^2_{L^2_{x,v}}
+C_{\varepsilon}\|\partial^{\alpha}f\|^2_{L^2_{x,v}}+
\varepsilon\|\partial^{\alpha}_{\beta}f\|^2_{L^2_{x,v}}\cr
&&\hspace{1cm}+ C\sum_{\substack{|\alpha_1|+|\alpha_2|\leq |\alpha|}}\int_{\bbr^d}\|\partial^{\alpha_1}f\|_{L^2_{x,v}}
\|\partial^{\alpha_2}f\|_{L^2_{x,v}}\|\partial^{\alpha}_{\beta}f\|_{L^2_{x,v}}dx\cr
&&\hspace{1cm}+ C\sum_{\substack{|\alpha_1|+|\alpha_2|\leq |\alpha|\\|\beta_2|\leq |\beta|}}\int_{\bbr^d}\|\partial^{\alpha_1}f\|_{L^2_{x,v}}
\|\partial^{\alpha_2}_{\beta_2}f\|_{L^2_{x,v}}\|\partial^{\alpha}_{\beta}f\|_{L^2_{x,v}}dx\cr
&&\hspace{1cm}+ C\sum_{\substack{|\alpha_1|+|\alpha_2|+|\alpha_3|\\\leq |\alpha|}}\int_{\bbr^d}\int_{\bbr^d}\|\partial^{\alpha_1}f\|_{L^2_{x,v}}
\|\partial^{\alpha_2}f\|_{L^2_{x,v}}\|\partial^{\alpha_3}f\|_{L^2_{x,v}}
\|\partial^{\alpha}_{\beta}f\|_{L^2_{x,v}}dx.
\end{eqnarray*}
Therefore, we have for sufficiently small $\varepsilon$
\begin{eqnarray*}
&&\frac{d}{dt}\|\partial^{\alpha}_{\beta}f\|^2_{L^2_{x,v}}+\frac{\nu_c}{2}\|\partial^{\alpha}f\|^2_{L^2_{x,v}}\cr
&&\hspace{1cm}\leq C_{\varepsilon}\sum_i\|\partial^{\alpha+\bar{e}_i}_{\beta-e_i}f\|^2_{L^2_{x,v}}
+C_{\varepsilon}\|\partial^{\alpha}f\|^2_{L^2_{x,v}}\cr
&&\hspace{1cm}+ C\sum_{\substack{|\alpha_1|+|\alpha_2|\leq |\alpha|}}\int_{\bbr^d}\|\partial^{\alpha_1}f\|_{L^2_{x,v}}
\|\partial^{\alpha_2}f\|_{L^2_{x,v}}\|\partial^{\alpha}_{\beta}f\|_{L^2_{x,v}}dx\cr
&&\hspace{1cm}+ C\sum_{\substack{|\alpha_1|+|\alpha_2|\leq |\alpha|\\|\beta_2|\leq |\beta|}}\int_{\bbr^d}\|\partial^{\alpha_1}f\|_{L^2_{x,v}}
\|\partial^{\alpha_2}_{\beta_2}f\|_{L^2_{x,v}}\|\partial^{\alpha}_{\beta}f\|_{L^2_{x,v}}dx\cr
&&\hspace{1cm}+ C\sum_{\substack{|\alpha_1|+|\alpha_2|+|\alpha_3|\\\leq |\alpha|}}\int_{\bbr^d}\int_{\bbr^d}\|\partial^{\alpha_1}f\|_{L^2_{x,v}}
\|\partial^{\alpha_2}f\|_{L^2_{x,v}}\|\partial^{\alpha_3}f\|_{L^2_{x,v}}
\|\partial^{\alpha}_{\beta}f\|_{L^2_{x,v}}dx\cr
&&\hspace{1cm}\equiv C_{\varepsilon}\sum_i\|\partial^{\alpha+\bar e_i}_{\beta-e_i}f\|^2_{L^2_{x,v}}
+C_{\varepsilon}\|\partial^{\alpha}f\|^2_{L^2_{x,v}}+I_{np}.
\end{eqnarray*}
The estimates for $I$ can be treated almost identically as in the proof of Theorem
\ref{local.existence} to obtain
\begin{eqnarray*}
I_{np}\leq C\big\{\sqrt{E(t)}+E(t)\big\}|||f|||^2,
\end{eqnarray*}
where we used the Sobolev embedding $H^s\hookrightarrow L^{\infty}.$
This yields
\begin{eqnarray*}
E^{\alpha}_{\beta}:\quad\frac{1}{2}\frac{d}{dt}\|\partial^{\alpha}_{\beta}f\|^2_{L^2_{x,v}}
+\frac{\nu_c}{2}\|\partial^{\alpha}_{\beta}f\|^2_{L^2_{x,v}}
&\leq&C_{\varepsilon}\sum_i\|\partial^{\alpha+\bar e_i}_{\beta-e_i}f\|^2_{L^2_{x,v}}
+C_{\varepsilon}\|\partial^{\alpha}f\|^2_{L^2_{x,v}}\cr
&+& C\big\{\sqrt{E(t)}+E(t)\big\}|||f|||^2.
\end{eqnarray*}
From the above inequality, we observe that the bad terms of $\sum_{|\beta|=m+1}E^{\alpha}_{\beta}$, that is
\[
\sum_{|\beta|=m+1}\Big\{C_{\varepsilon}\sum_i\|\partial^{\alpha+\bar e_i}_{\beta-e_i}f\|^2_{L^2_{x,v}}
+C_{\varepsilon}\|\partial^{\alpha}f\|^2_{L^2_{x,v}}\Big\},
\]
can be absorbed in the good terms of $C_m\sum_{|\beta|=m}E^{\alpha}_{\beta}+C_m\sum_{\alpha}E^{\alpha}$ if $C_m$
is sufficiently large. Therefore, by an induction argument, we can find constants $\bar{C}_m$ and $\delta_m$ such that
\begin{eqnarray*}
\sum_{\substack{|\alpha|+|\beta|\leq N,\\ |\beta|\leq m}}
\Big\{\bar{C}_m\frac{d}{dt}\|\partial^{\alpha}_{\beta}f\|^2_{L^2_{x,v}}
+\delta_m\|\partial^{\alpha}_{\beta}f\|^2_{L^2_{x,v}}\Big\}\leq C_N\big\{\sqrt{E(t)}+E(t)\big\}|||f|||^2.
\end{eqnarray*}
We now suppose $E<1$ without loss of generality and set $m=N$ to obtain
\begin{eqnarray*}
\sum_{\substack{|\alpha|+|\beta|\leq N}}
\Big\{\bar{C}_N\frac{d}{dt}\|\partial^{\alpha}_{\beta}f\|^2_{L^2_{x,v}}
+\delta_N\|\partial^{\alpha}_{\beta}f\|^2_{L^2_{x,v}}\Big\}\leq C_N\big\{\sqrt{E(t)}\big\}|||f|||^2.
\end{eqnarray*}
Notice that we used $E(t)\leq \sqrt{E(t)}$ and redefined $2C_N$ by $C_N$.
We now define $y(t)$ as
\[
y(t)=\sum_{|\alpha|+|\beta|\leq N}
\bar{C}_N\frac{d}{dt}\|\partial^{\alpha}_{\beta}f\|^2_{L^2_{x,v}}.
\]
We choose a constant $C_1$ such that
\begin{eqnarray*}
\frac{1}{C_1}\Big\{y(t)+\frac{\delta_N}{2}\int^t_0|||f(s)|||^2ds\Big\}\leq E(t)\leq C_1\Big\{y(t)+\frac{\delta_N}{2}\int^t_0|||f(s)|||^2ds\Big\}
\end{eqnarray*}
We define
\[
M=\min\Big\{\frac{\delta^2_N}{8C_N^2C_1^2},\frac{M_0}{2C^2_2}\Big\}
\]
and choose the initial data sufficiently small in the sense that
\[
E(0)\leq M<M_0.
\]
Let $T>0$ be given as
\[
T=\sup_t\Big\{t: E(t)\leq 2C^2_1 M\Big\}>0,
\]
which gives
\[
E(t)\leq 2C^2_1M\leq M_0.
\]
We then have for $0\leq t\leq T$
\begin{align}
\begin{aligned}\label{y+|||f|||}
y^{\prime}(t)+\delta_N|||f|||^2(t)&\leq C_N\sqrt{E(t)}|||f|||^2(t)\cr
&\leq C_NC_1\sqrt{2M}|||f|||^2(t)\cr
&\leq\frac{\delta_N}{2}|||f|||^2(t).
\end{aligned}
\end{align}
Therefore, integration above over $0\leq t\leq T$ yields
\begin{eqnarray*}
E(t)&\leq& C_1\Big\{y(t)+\frac{\delta_N}{2}\int^t_0|||f(s)|||^2ds\Big\}\cr
&\leq& C_1y(0)\cr
&\leq&C^2_1E(0)\cr
&\leq&C^2_1M\cr
&<&2C^2_1M.
\end{eqnarray*}
This is a contradiction considering the continuity of $E$ and the definition of $T$.
Hence we have $T=\infty$. 
By (\ref{y+|||f|||}) and $y(t)\leq C|||f(t)|||$ for some constant $C$, we have
\begin{eqnarray*}
y^{\prime}(t)+\frac{\delta_N}{2}y(t)\leq y^{\prime}(t)+\frac{\delta_N}{2}|||f|||^2(t)\leq 0,
\end{eqnarray*}
which gives the exponential decay of the perturbation.
\newline We are now left with the $L^2$-stability estimate.
Let $\bar f$ be another solution corresponding to initial data $\bar f_0$. We subtract the equation for $\bar f$ from
the equation for $f$ to see
\begin{eqnarray*}\label{Uniqueness2}
\big\{\partial_t+v\cdot \nabla\big\}(f-\bar f)&=&L(f-\bar f)+\Gamma_{1,2,3}(f-\bar f,f)+\Gamma_{1,2,3}(\bar f,f-\bar f)\cr
&+&\Gamma_4(f-\bar f,f,f)+\Gamma_4(\bar f,f-\bar f,f)+\Gamma_4(\bar f,\bar f,f-\bar f).
\end{eqnarray*}
We then multiply $f-\bar f$ and integrate over $x,v$ and $t$ to have
\begin{eqnarray*}
\frac{1}{2}\frac{d}{dt}\|f(t)-\bar f(t)\|^2_{L^2_{x,v}}\!\!\!&=&\!\langle L(f-\bar f),f-\bar f\rangle
+\langle\Gamma_{1,2,3}(f-\bar f,f)+\Gamma_{1,2,3}(\bar f,f-\bar f),f-\bar f\rangle\cr
&&+\Gamma_4(f-\bar f,f,f)+\Gamma_4(\bar f,f-\bar f,f)+\Gamma_4(\bar f,\bar f,f-\bar f),f-\bar f\rangle.
\end{eqnarray*}
We now apply the coercivity estimate in Theorem \ref{coercivity2}:
\[
\langle L(f-\bar f),f-\bar f\rangle\leq -\delta\|f-\bar f\|^2_{L^2_{x,v}}
\]
and the following estimates from Proposition \ref{bilinear.estimate} and (\ref{Sobolev}):
\begin{eqnarray*}
&&\langle\Gamma_{1,2,3}(f-\bar f,f)+\Gamma_{1,2,3}(\bar f,f-\bar f),f-\bar f\rangle\cr
&&\hspace{1cm}+\langle\Gamma_4(f-\bar f,f,f)+\Gamma_4(\bar f,f-\bar f,f)+\Gamma_4(\bar f,\bar f,f-\bar f),f-\bar f\rangle\cr
&&\hspace{1cm}\leq C\sum_{|\alpha|\leq 2}\big(\|\partial^{\alpha}f\|^2_{L^2_{x,v}}
+\|\partial^{\alpha}\bar f\|^2_{L^2_{x,v}}+\|\partial^{\alpha}f\|_{L^2_{x,v}}
+\|\partial^{\alpha}\bar f\|_{L^2_{x,v}}\big)\|f-\bar f\|^2_{L^2_{x,v}}\cr
&&\hspace{1cm}\leq C\Big\{\sqrt{E_f}(t)+\sqrt{E_{\bar f}}(t)
\Big\}\|f-\bar f\|^2_{L^2_{x,v}}
\end{eqnarray*}
to get
\begin{eqnarray*}
&&\frac{1}{2}\frac{d}{dt}\|f(t)-\bar f(t)\|^2_{L^2_{x,v}}+\Big(\delta-\Big\{\sqrt{E_f}(t)+\sqrt{E_{\bar f}}(t)\Big\}\Big)
\|f(t)-\bar f(t)\|^2_{L^2_{x,v}}\leq 0,
\end{eqnarray*}
which gives the desired result for sufficiently small $E_g(0)$ and $E_f(0)$.

\end{document}